\newcommand{\thetitle}{Algebraic structure of aromatic B-series}
 \newcommand{\thekeywords}{Aromatic series, Aromatic trees, Composition Law, Substitution law, Coalgebra of aromatic trees, Coproduct.}
 \newcommand{\theamssubjectclassification}{ %
 37C10, %
 41A58, %
 16T05 %
 }
\newcommand{\theoremname}{Theorem}
\newcommand{\propositionname}{Proposition}
\newcommand{\corollaryname}{Corollary}
\newcommand{\lemmaname}{Lemma}
\newcommand{\definitionname}{Definition}
\newcommand{\remarkname}{Remark}
\newcommand{\exercisename}{Exercise}
\newcommand{\examplename}{Example}
\newcommand{\kk}{\mathbbm{k}}
\newcommand{\from}{\mathpunct{:}}
\newcommand{\RR}{\mathbb{R}}
\newcommand{\CC}{\mathbb{C}}
\newcommand{\AAA}{\mathcal{A}}
\newcommand{\AT}{\mathcal{AT}}
\newcommand{\AF}{\mathcal{AF}}
\newcommand{\F}{\mathcal{F}}
\newcommand{\T}{\mathcal{T}}
\newcommand{\id}{\mathrm{id}}
\newcommand{\union}{\cup}
\newcommand{\tensor}{\otimes}
\newcommand{\efor}{\mathbf{1}}
\newcommand{\ud}{\mathrm{d}}
\DeclareMathOperator*{\ddiv}{div}
\newcommand{\lie}[1]{\mathfrak{#1}}
\newcommand{\EE}{\mathcal{E}}
\DeclareMathOperator{\tr}{tr}
\DeclareMathOperator{\Hom}{Hom}
\date{}
\begin{document}
\newtheoremstyle{thmstyle} %
    {\topsep}              %
    {\topsep}              %
    {\itshape}             %
    {}                     %
    {\bfseries}            %
    {.}                    %
    {.5em}                 %
    {\thmname{#1}\thmnumber{ #2}\thmnote{ (#3)}} %
\theoremstyle{thmstyle}
\newtheorem{theorem}{\theoremname}[section]
\newtheorem{proposition}[theorem]{\propositionname}
\newtheorem{corollary}[theorem]{\corollaryname}
\newtheorem{lemma}[theorem]{\lemmaname}
\newtheoremstyle{defstyle}
    {\topsep}
    {\topsep}
    {}
    {}
    {\bfseries}
    {.}
    {.5em}
    {\thmname{#1}\thmnumber{ #2}\thmnote{ (#3)}}
\theoremstyle{defstyle}
\newtheorem{definition}[theorem]{\definitionname}
\newtheorem{remark}[theorem]{\remarkname}
\newtheorem{example}[theorem]{\examplename}
\newtheorem{exercise}{\exercisename}[section]
\title{\thetitle}
\author{Geir Bogfjellmo\\
{\small Department of Mathematical Sciences}\\
{\small Norwegian University of Science and Technology}\\
{\small Trondheim, Norway} }
\maketitle

\abstract{Aromatic B-series are a generalization of B-series. Some of the
  algebraic structures on B-series can be defined analogically for aromatic B-series. This paper derives combinatorial formulas for the composition and substitution laws for aromatic B-series.}

\noindent{\textbf{Keywords:}} \thekeywords

\noindent{\textbf{Mathematics Subject Classification (2010):}} \theamssubjectclassification

\tableofcontents

\section{Introduction}
B-series have long been an important tool for studying numerical integrators
for ordinary differential equations.
These series have an interesting and rich algebraic structure.

A generalization of B-series is formed by the aromatic B-series, introduced by
Munthe-Kaas and Verdier.

The purpose of the present paper is to describe algebraic structures on aromatic
B-series corresponding to known algebraic structures on normal B-series.

Superficially, an aromatic B-series represents a vector field.
It is also interesting to study aromatic S-series, which
can represent scalar functions, vector fields, higher degree differential
operators or linear combinations thereof.\footnote{(Nonaromatic) S-series are
  also defined, but cannot represent non-constant scalar functions.}

In particular:
\begin{itemize}
\item Theorem \ref{thm: compprod} together with Theorem \ref{thm:Sser} describe the composition of aromatic B-series.
\item Theorem \ref{thm: sublaw} describes the substitution law for aromatic B-series.
\end{itemize}

The structure of the paper is as follows:
Section \ref{sec:background} provides a background describing ordinary B-series
and some of their algebraic structure.
Section \ref{sec:basic} defines aromatic forests, aromatic B-series and the generalization aromatic
S-series.
Section \ref{sec:coalgebra} and section \ref{sec:complaw} develop the
combinatorial formulas for composing aromatic S-series and, by extension,
aromatic B-series.
Section \ref{sec:sublaw} studies the substitution law on aromatic B-series and
S-series, as well as the interaction between the substitution and composition
laws.
Section \ref{sec:final} is an application of the substitution law to define a
pseudo-volume-preserving aromatic B-series method.

\section{Background}
\label{sec:background}
The use of formal series as a tool for studying integrators of ordinary
differential equations has a long and successful history,
see e.g.~\cite{MS15} for an overview.

The most commonly known example are the Butcher series or B-series.
These were originally introduced by Butcher \cite{Butcher63,Butcher72} and by
Hairer and Wanner \cite{HW74} as a tool to study Runge--Kutta methods for the
numerical solution of ordinary differential equations.

B-series have a deep algebraic structure, see for instance Manchon
\cite{Manchon15}.
B-series also arise naturally in other fields of mathematics,
Brouder \cite{brouder00} pointed out an important link to the work by Connes and
Kreimer \cite{CK98}, which was originally written in the context of renormalization.

See also \cite[Section III]{HLW} for an introduction to B-series,
and \cite{MMMV17} for a review of the history of B-series.

In the context of numerical integrators, B-series arise as follows:
Let $\kk\in \{\RR, \CC\}$ and
let $f$ be a smooth vector field on a finite dimensional $\kk$-vector space $W$ defining the ODE
\begin{equation}
  \dot{x}=f(x).
  \label{eq: ODE}
\end{equation}
For a large class of one-step integrators (including the
exact ``integrator'' and all Runge--Kutta methods) the following holds:
When the integrator is applied to \eqref{eq: ODE} and the result of one step with the integrator is expanded as a power
series in the step size $h$, all terms occurring in the series expansion are
formed by combining derivatives of $f$, e.g.
\[f' f'''(f'f,f,f)
\]
where $f'=\frac{\ud f}{\ud x}$ is the Jacobian of $f$ and $f'''=\frac{\ud^3
  f}{\ud x^3}$ is the third derivative of $f$, viewed as a trilinear map
$f'''\from W^{\times 3}\to W$.

Such expressions are called \emph{elementary differentials}, and the set of all
elementary differentials is in a one-to-one correspondence to the set of
\emph{non-planar rooted trees}, here denoted $\T$.

For instance, for the elementary differential above,
\begin{equation}
  f' f'''(f'f,f,f)=F_f\left( \forest{b[b[b[b],b,b]]} \right),
\label{eq: F}
\end{equation}
where $F_f$ is the bijective function from $\T$ to the set of elementary
differentials formed from $f$.

For a specific vector field $f$ and tree $\tau$, $F_f(\tau)$ is again vector field
over $W$.

A \emph{B-series} is defined as a formal sum of elementary differentials, that is, a series of
the form
\[
  B_f(a)=\sum_{\tau\in \T} \frac{a(\tau)}{\sigma(\tau)} F_f(\tau),
\]
where $a\from \T\to \kk$, and the normalization factors $\sigma\from \T\to \kk$
are defined such that $\sigma(\tau)$ is equal to the cardinality of the symmetry
group of the tree $\tau$.

A \emph{B-series integrator} is a numerical integrator such that each individual
step of the integrator
can be expanded as a B-series
\[x_{k+1}= x_k +B_{f}(a)(x_k)=[id+B_f(a)](x_k).
\]
(Here the step size $h$ is subsumed into the vector field $f$.)

As mentioned above the class of such method includes all Runge--Kutta methods
and the exact ``method'' defined by exactly following the vector field.

B-series have a rich algebraic structure that has been studied by many authors.
We will briefly cover some of the structure here, see e.g.
\cite{murua99,CHV10,CEM11,Manchon15} for proofs and more background.

The algebraic structure of B-series can be described in terms of certain algebraic objects
called \emph{Hopf algebras.}
See Appendix \ref{app: coalg} for a brief introduction to Hopf algebras.

The \emph{Connes--Kreimer Hopf algebra $H_\F$} is the Hopf algebra over
\emph{rooted forests}, i.e. multisets of rooted trees \cite{CK98}.
It is convenient to extend the definition of $F_f$ to forests,
where the resulting product of elementary differentials is viewed as a
differential operator on smooth functions $C^\infty(W)$,
e.g.
\[F_f(\tau_1\tau_2)[g]=g''\left[ F_f(\tau_1), F_f(\tau_2)\right]\]
and
\[
  F_f(\efor)[g]=g,
\]
where $\efor$ denotes the empty forest.

We denote the algebraic dual of $H_\F$ by $H_\F^\ast$.
The elements of $H_\F^\ast$ are formal series indexed by rooted
forests.

The dual of a Hopf algebra is naturally an associative, unital algebra with a product given by
the dual of the coproduct, known as the \emph{convolution product}:
\begin{equation}
a\cdot b(x)= \mu_\kk\circ (a\tensor b)\circ \Delta(x),
\label{eq: compprod}
\end{equation}
where $\mu_\kk$ denotes multiplication in $\kk$ and $\Delta$ denotes the
coproduct in the Hopf algebra.

For a fixed vector field $f$, $a\in H_\F^{\ast}$ can be identified with a formal
series of differential operators acting on functions in $C^\infty(W)$,
\begin{equation}
 S_f(a)= \sum_{\phi \in \F} \frac{a(\phi)}{\sigma(\phi)} F_f(\phi),
 \label{eq:Snorm}
 \end{equation}
where $\sigma(\phi)$ is the cardinality of the symmetry group of $\phi$.
Such series were named \emph{S-series} by Murua \cite{murua99}.

It turns out that the convolution product in $H_\F^\ast$ corresponds to composition of formal series of differential operators
\[S_f(a_1\cdot a_2)=S_f(a_1)\circ  S_f(a_2).
\]

Of special importance are the \emph{characters} of $H_\F$, that is to say the linear maps
$a\from H_\F \to \kk$ satisfying
\[
 a(xy)=a(x)a(y), \quad \text{and} \quad a(\efor)=1,
\]
where $\efor$ denotes the empty forest.

It can be verified that the set of characters form a group under the
convolution product with identity given by the co-unit in $H_\F$.
We will denote this group by $G(H_\F)$.

There is a natural one-to-one correspondence between B-series and characters of $H_\F$:
For $a\from \T \to \kk$, there is a unique character $\kappa(a)\from H_\F \to
\kk$ that agrees with $a$ on $\T$.

$\kappa(a)$ can be described by formally expanding $g(y+B_f(a)(y))$ as a Taylor
series at $y$.
\[g(y+B_f(a)(y))=S_f(\kappa(a))[g](y).\]

The group of characters is an infinite dimensional Lie group \cite{BS17}, and
its Lie algebra is formed by the \emph{infinitesimal characters} of $H_\F$, that is
morphisms $b\from H_\F\to \kk$, satisfying
\[
  b(xy)=b(x)\epsilon(y)+\epsilon(x)b(y),
\]
where $\epsilon\from H_\F\to \kk$ is the counit of $H_\F$.

It can be verified that the set of infinitesimal characters forms a Lie algebra
under anti-symmetrization of the convolution product.

We will denote the Lie algebra of infinitesimal characters $\lie{g}(H_\F)$.

Infinitesimal characters correspond to formal series of vector fields on $W$.
For example, the modified vector fields of numerical integrators used in
backward error analysis are given by the logarithm associated with the
product \eqref{eq: compprod}, or, equivalently with the Lie group logarithm on
the group of characters.

\subsection{The composition and substitution product}
As we have seen above, composition of two S-series correspond to the convolution
product on $H_\F^\ast$.

There is another product on S-series, the \emph{substitution product.}

The substitution product $\star\from \lie{g}(H_\F)\times H_\F^\ast \to
H_\F^{\ast}$,
arises from replacing the vector field $f$ in an S-series with a B-series $B_f(b)$.
\[S_f(b\star a)= S_{B_f(b)}(a).\]

The substitution product can be described in terms of the
\emph{rooted tree bialgebra} $\tilde{H}$, which is isomorphic to
$H_\F$ as an algebra.
Calaque, Ebrahimi-Fard and Manchon \cite{CEM11} described a compatible coproduct
$\tilde{\Delta}$ on $\tilde{H}$, as well as a left $\tilde{H}$-comodule map $H_\F \to
\tilde{H}\tensor H_\F$.

We will denote the set of characters of $\tilde{H}$ by $G(\tilde{H})$, although
it is only a monoid.
The substitution product $b\star a$ can defined as either:
\begin{enumerate}
  \item The dual of $\tilde{\Delta}$ when both arguments are in $G(\tilde{H})$, or
  \item The dual of the comodule map, when $b\in G(\tilde{H})$, $a\in
    H_\F^\ast.$
\end{enumerate}

The interaction between the substitution and convolution products is defined
via these two descriptions of $\star$, as well as that any character $b \in
G(\tilde{H}$ defines an algebra automorphism $H_\F^\ast\to H_\F^\ast$ by
$a\mapsto b \star a$.

\section{Aromatic B-series}
\label{sec:basic}
McLachlan et.al.~\cite{MMMV15} classified the B-series methods as exactly the integrators
equivariant under all affine maps, including maps between affine spaces of
non-equal dimension.

Munthe-Kaas and Verdier had earlier \cite{MV15} showed that a larger class of
methods, \emph{Aromatic Butcher series methods}, are equivariant  under
all \emph{invertible} affine maps.

Series related to the aromatic Butcher series (aromatic B-series) had been
studied already by
Iserles, Quispel and Tse \cite{IQT07}, and by Chartier and Murua \cite{CM07}.

The crucial difference between B-series and aromatic Butcher series is that in
the aromatic case, \emph{trace operations} are also allowed in forming elementary
differentials, e.g. $\tr(f')f=(\ddiv f) f$.

On the combinatorial side, these new elementary differentials are obtained by
replacing the set of rooted trees with a larger set of directed graphs, e.g.
\[\tr(f')f = F_f(\dloop{b}\forest{b}).\]

For our purposes, a \emph{directed graph} $\gamma=(V,E)$ is defined by a finite set of \emph{vertices or nodes} $V$, and a set of \emph{edges} $E\subseteq V\times V$.
We say that the edge $(v_1, v_2)$ goes out of $v_1$ and into $v_2$.
A \emph{subgraph} of $\gamma$, is another directed graph $(W,F)$ where $W\subseteq V$, $F\subseteq W\times W \cap E$.
In this definition of graph, we allow the empty graph with $0$ vertices, and self-loops.

On a given graph, we define the \emph{direct predecessor} function $\pi_\gamma$ from $V$ to the power set of $V$ by $v_1 \in \pi_\gamma(v_2)$ iff $(v_1,v_2)\in E$.

Two graphs are \emph{equivalent} and we write $\gamma_1=(V_1,E_1)\simeq (V_2,E_2)=\gamma_2$ if there exists a bijection $g\from V_1\to V_2$ such that $(g\times g)(E_1)=E_2$.

The \emph{automorphism group} of a directed graph is the set of permutations $g\from V \to V$ satisfying that $(g\times g)(E)=E$.

Aromatic elementary differentials correspond to equivalence classes of directed graphs satisfying an additional criterion.
\begin{definition}
An \emph{aromatic forest} is an equivalence class of directed graphs where each node has at most one outgoing edge.
We denote the set of aromatic forests as $\AF$.
A \emph{root} of an aromatic forest is a node with zero outgoing edges.
The set of roots of the aromatic forest $\phi$ is denoted $r(\phi)$.
\end{definition}
We will occasionally refer to aromatic forests as graphs (as opposed to equivalence classes of graphs).
Take these statements to mean a member of the equivalence class, the independence of choice of member is either obvious or explicitly stated.

It follows from the definition that an aromatic forest consists of connected
components, each of which has either (a) one root, in which case the connected
component is a \emph{rooted tree}, or (b) no roots, in which case it contains
exactly one cycle and is called an \emph{aroma}.
\begin{definition}
We define the following subsets of $\AF$:
\begin{itemize}
\item $\AAA = \{\efor,\dloop{b}, \dloop{b[b]}, \dloop{b,b},\dloop{b}\dloop{b}, \dloop{b[b[b]},\dotsc  \}$, the set of aromatic forests with no roots.
\item $\AAA' = \{\dloop{b}, \dloop{b[b]}, \dloop{b,b}, \dloop{b[b[b]},\dotsc
  \}$, the set of connected aromatic forests with no roots, or aromas.
\item $\AT = \forest{b}, \forest{b[b]}, \dloop{b}\forest{b}, \forest{b[b,b]}\dotsc$, the set of aromatic forests with exactly one root.
\item $\T=  \forest{b}, \forest{b[b]}, \forest{b[b,b]}\dotsc$, the set  of rooted trees, or connected aromatic forests with exactly one root.
\item $\F$, the set of loopless aromatic forests, which are (unordered) multisets of rooted trees.
The set $\F$ is also called \emph{forests}.
\item $\F_k$, the set of forests with exactly $k$ roots.
\end{itemize}
\end{definition}
We note that $\AF = \AAA \times \F$.

For a fixed vector field $f$ on a finite dimensional vector space $W$, an
aromatic forest defines an \emph{elementary differential operator} acting on
smooth functions on $W$.
\begin{definition}
Let $\phi=(V,E)$ be an aromatic forest, and $f$ a smooth vector field on $W$.
The elementary differential operator $F_f(\phi)$ is an differential operator acting on smooth functions on $W$, defined as follows:
 For each node $p \in V$, form the factor $f^{i_p}_{I_{\pi(p)}}$, where
$I_{\pi(p)}=i_{q_1}i_{q_2}\dotsm$ is the multiindex defined by the direct
predecessor function on $\phi$, that is $q_1,q_2, \dotsc$ are the vertices such
that $(q_j,p)\in E$.
The upper index on $f$ corresponds to the vector components of $f$ and the lower are partial derivatives with respect to the coordinate directions, $f^{i_p}_{i_{q_1}\dotsm i_{q_m}}=\partial^m f^{i_p}/\partial x_{i_{q_1}}\dotsm \partial x_{i_{q_m}}$.
Then, for each root $s$, form the factor $\partial_{i_s} = \partial/\partial{x_{i_s}}$.
\label{def:elemdiff}
Finally, multiply the factors and sum over repeated indices.
\end{definition}

\begin{example}[Elementary differential operator]
\label{exa: eldiffexample}
Let $\gamma$ be the tree with indices
\[\begin{tikzpicture}
\begin{scope}[etree, scale = 1.5]

\placeroots{3}
\children[1]{child{node(i){}}
             child{node(j){}}}
\children[3]{child{node(k){}}}
  \jointrees{1}{2}
  \end{scope}
  \node[above] at (i) {$i$};
  \node[above] at (j) {$j$};
  \node[above] at (k) {$k$};
  \node[below] at (tree1) {$l$};
  \node[below] at (tree2) {$m$};
  \node[below] at (tree3) {$n$};
\end{tikzpicture},
\]
(in this picture, $i,j,k,\dotsc$ are used in place of
$i_p$ for $p \in V(\gamma)$.)
Then $F_f(\gamma)= f^if^jf^l_{ijm} f^m_l f^k f^n_k\partial_n$.
\end{example}
When $\phi$ contains no aromas, the elementary differential operator $F_f(\phi)$
corresponds to the product of elementary differential operators referenced in
section \ref{sec:background}.

In general, an elementary differential operator $F_f(\phi)$ is a differential
operator of degree $|r(\phi)|$.
Thus, if $\gamma \in \AAA$, $F_f(\gamma)$ is a scalar field.
If $\phi = \gamma \tau_1\tau_2\dotsm \tau_r$, where $\gamma \in \AAA$, $\tau_i\in \T$, then
$F_f(\phi)=F_f(\gamma)F_f(\tau_1)\dotsm F_f(\tau_r)$ is the product of the
scalar field $F_f(\gamma)$ and $r$ vector fields $F_f(\tau_1), \dotsc,
F_f(\tau_r)$.

The action of $F_f(\tau)$ on a smooth function $g$ over $W$ is
\[F_f(\phi)[g]=F_f(\gamma) g^{(r)}\left(F_f(\tau_1), \dotsc,
    F_f(\tau_r)\right),\]
where $g^{(r)}$ is the $r$th derivative of $g$.

An elementary differential operator $F_f(\phi)$ can be considered to be a
$|\phi|$-linear function of vector fields $\EE_\phi$ evaluated on $|\phi|$
copies of $f$.
\begin{equation}
  \EE_\phi(f, f,\dotsc, f)=F_f(\phi),
  \label{eq: elemdifffunction}
\end{equation}
where each argument of $\EE_\phi$ is identified with a vertex in the graph $\phi$.

In \cite{MV15}, aromatic B-series methods are defined as integrators whose
series expansions only contains terms of the form $F_{f}(\tau)$ where $\tau$ is
an aromatic tree.

\begin{definition}
Let $a\from \AT\to \kk$ and $f$ be a smooth vector field on the finite-dimensional
vector space $W$.
The \emph{aromatic B-series} $B_f(a)$ is the formal series
\[B_f(a) = \sum_{\tau \in \AT}\frac{a(\tau)}{\sigma(\tau)} F_f(\tau).\]
The normalization constant $\sigma$ is here defined, for an aromatic tree
$\tau$, to be the cardinality of $G_\tau$, the \emph{graph automorphism group}
of $\tau$.

For later use, we also define $\sigma(\phi)$ for an aromatic forest $\phi$ to be
the cardinality of $G_\phi$.

An \emph{aromatic B-series method} is an integrator whose update
map is given by
\[y_{k+1}=y_k +B_{f}(a)(y_k)
\]
\end{definition}
In the formula for the aromatic B-series method, the step size parameter $h$ is
subsumed into the vector field $f$.

To be able to compose aromatic B-series, we want to describe the effect of such
an integrator to a function, i.e. evaluate $g(y+B_f(a)(y))$ for smooth $g$. To
do this, we introduce aromatic S-series, which mirror normal S-series \eqref{eq:Snorm}.

\begin{definition}
Let $a\from \AF \to \kk$, and $f$ be a smooth vector field on a finite
dimensional vector space $W$.
Let the \emph{aromatic S-series} of $a$ be the formal series of differential
operators acting on smooth functions $g\in C^\infty(W)$ defined by
\[S_f(a)[g]=\sum_{\phi \in \AF} \frac{a(\phi)}{\sigma(\phi)}F_f(\phi)[g].\]
\label{def:Sseries}
\end{definition}

For an aromatic \emph{B-series} $B_f(a)$, where $a\from \AT\to \kk$, there is a
corresponding aromatic \emph{S-series} $S_f(\kappa(a))$.
The action of $S_f(\kappa(a))$ on a smooth function $g$, is calculated by expanding
$g(y+B_f(a)(y))$ as a Taylor series around $y$.
Obtaining the coefficients of $\kappa(a)$ will require some algebraic machinery which will be developed in the next section.

For now, we define the composition product of two S-series.
\begin{definition}
For $a,b \from \AF \to \kk$, define the \emph{composition product} $a\cdot b$ be the aromatic B-series defined by its action on smooth functions $S_f(a\cdot b)[g] = S_f(a)[S_f(b)[g]].$
\end{definition}

The building block of the composition product is the composition of two elementary differential operators.
\begin{definition}
The \emph{composition} of two aromatic forests $\phi_1 \circ \phi_2$ is the formal sum of aromatic forests defined as follows: Let $\star$ be a dummy node not contained in either $V(\phi_1)$ or $V(\phi_2)$  and $(V(\phi_2) \union \star)^{r(\phi_1)}$ the set of functions $\rho\from r(\phi_1)\to V(\phi_2)\union \star$. Let
\[\phi_1 \circ \phi_2 = \sum_{\rho\in (V(\phi_2)\union \star)^{r(\phi_1)}} \Phi(\phi_1, \phi_2, \rho),\]
where $\Phi(\phi_1, \phi_2, \rho)$ is the aromatic forest with nodes $V(\phi_1)\union V(\phi_2)$ and edges $E(\phi_1)\union E(\phi_2) \union E_\rho$, where
$E_\rho= \{(p, \rho(p))\text{ s.t. }p\in r(\phi_1), \rho(p)\neq \star \}$.
\label{def:products}
\end{definition}
Informally, the composition product is formed by the sum over all possible ways to add edges from some (possibly none) roots of $\phi_1$ to the nodes of $\phi_2$.

\begin{example}
  In this example, the nodes are colored to more clearly separate the two
  factors.
  \[
    \begin{aligned}
      \forest{w[w],w}\circ \dloop{b[b]}=&
      \dloop{b[b]}\forest{w[w],w}+\dloop{b[b[w]]}\forest{w[w]}+\dloop{b[w,b]}\forest{w[w]}\\
      &+\dloop{b[b[w[w]]]}\forest{w}+\dloop{b[w[w],b]}\forest{w}+\dloop{b[b[w[w],w]}+\dloop{b[w[w],b[w]]}
     \end{aligned}
  \]

\end{example}

\begin{lemma}
When $g$ is a smooth function over $W$, the equality
\[F_f(\phi_1)[F_f(\phi_2)[g]]=F_f(\phi_1 \circ \phi_2)[g],\]
holds.
\label{lem:compprod1}
\end{lemma}
\begin{proof}
By Definition \ref{def:elemdiff} $F_f(\phi_{1})$ is a sum of products of the form
\[F_f(\phi_1)= \prod_{p\in V(\phi_1)} f^{i_p}_{I_{\pi(p)}} \prod_{s\in r(\phi_1)} \partial_{i_s},\]
and similar for $\phi_2$.
Use the product rule to expand
\[F_f(\phi_1)[F_f(\phi_2)[g]]=  \prod_{p\in V(\phi_1)} f^{i_p}_{I_{\pi(p)}} \prod_{s\in r(\phi_1)} \partial_{i_s}\left[ \prod_{q\in V(\phi_2)} f^{i_q}_{I_{\pi(q)}} \prod_{t\in r(\phi_2)} g_{i_t}\right].\]
The result is a sum of terms, where in each term, each of the operators $\partial_{i_s}$ is applied to (a) one of the factors $f^{i_q}_{I_{\pi(q)}}$ or (b) one of the factors $g_{i_t}$.
Referring to the definition of the elementary differential operators and the composition, case (a) corresponds to adding an edge $s\rightarrow q$, while (b) corresponds to not adding an edge from $s$.
The sum is then taken over all possible choices.
\end{proof}

\section{Coalgebra structure}
\label{sec:coalgebra}
We will be interested in series indexed by aromatic forests, and composition of
such series.

A convenient environment for studying series indexed by elements of a certain set $S$ is the dual space of the free vector space over $S$.
Let
\begin{equation}
  C_{\AAA} = \bigoplus_{\gamma \in \AAA} \kk \gamma,\ H_{\F} = \bigoplus_{\omega \in \F} \kk \omega \text{ and }C_{\AF} =C_{\AAA} \otimes H_{\F} = \bigoplus_{\phi \in \AF} \kk \phi,
  \label{eq: spaces}
\end{equation}
 denote the free vector spaces over the sets $\AAA$, $\F$ and $\AF$.
$H_{\F}$, when equipped with the correct structure, is the
\emph{Connes--Kreimer} Hopf algebra from B-series theory.

$C_{\AAA}$ is the vector space of commutative polynomials in the variables $\AAA'$.

The algebraic dual spaces of the spaces in \eqref{eq: spaces} are
\begin{equation}
C_{\AAA}^\ast = \prod_{\gamma \in \AAA} \kk \gamma^\ast,\ H_{\F}^\ast = \prod_{\omega \in \F} \kk \omega^\ast \text{ and }C_{\AF}^\ast = \prod_{\phi \in \AF} \kk \phi^\ast,
  \label{eq: dualspaces}
\end{equation}
where $\gamma^\ast, \omega^\ast$ and $\phi^\ast$ denote elements in the
respective dual bases.

An element $a\in C_{\AAA}^\ast$ is identified with the formal series
\[\sum_{\gamma\in\AAA} \frac{a(\gamma)}{\sigma(\gamma)}\gamma.\]
Correspondingly for elements in $H_{\F}^\ast$ and $C_{\AF}^\ast$.

In the following, we will describe coalgebra\footnote{All (co)algebras considered in this paper are (co)associative and (co)unital, and we will omit the qualifiers.} structures on $C_\AAA$, $H_{\F}$ and $C_{\AF}$. We recall that a coalgebra is a vector space $C$ equipped with a coproduct
$\Delta \from C \to C\tensor C$ and a counit $\epsilon \from C \to \kk$.
Following Sweedler \cite{Swe69}, the coproduct is written
$\Delta(c)=\sum_{(c)}c_{(1)}\tensor c_{(2)}.$ For a precise definition of a
coalgebra, see Appendix \ref{app: coalg} or \cite{Swe69}.

$C_\AAA$ is the dual of the algebra of formal power series in the variables
$\AAA'$.
Let $a\in C_\AAA^\ast$ be identified with the formal infinite series $\sum_{\gamma\in \AAA} \frac{a(\gamma)}{\sigma(\gamma)}\gamma$
and correspondingly for $b\in C_\AAA^\ast$. Let $a\cdot b\in C_\AAA^\ast$ be defined by
\[\sum_{\gamma\in \AAA} \frac{a\cdot b(\gamma)}{\sigma(\gamma)}\gamma= \left(\sum_{\gamma_1} \frac{a(\gamma_1)}{\sigma(\gamma_1)}\gamma_1 \right)\left(\sum_{\gamma_2} \frac{b(\gamma_2)}{\sigma(\gamma_2)}\gamma_2 \right),\]
where the right hand side is the formal multiplication of series.

It is clear that $a\cdot b(\gamma)$ is a finite sum of terms of the form $a(\gamma_{(1)})b(\gamma_{(2)})$, and we define the coproduct $\Delta_{\AAA}(\gamma)=\sum_{(\gamma)}\gamma_{(1)}\otimes \gamma_{(2)}$ by
\[a\cdot b(\gamma) = \sum_{(\gamma)} a(\gamma_{(1)})b(\gamma_{(2)}).\]
Coassociativity of $\Delta_{\AAA}$ follows from associativity of formal commutative multiplication of series.
The counit $\epsilon_{\AAA}$ is given by $\efor^* \from C_\AAA \to \kk$.

\begin{example}
\begin{multline*}
      \Delta_\AAA(\dloop{b}\dloop{b}\dloop{b,b})=\\
      \efor\tensor\dloop{b}\dloop{b}\dloop{b,b}+2\dloop{b}\tensor
\dloop{b}\dloop{b,b}+\dloop{b}\dloop{b}\tensor\dloop{b,b}+2\dloop{b}\dloop{b,b}\tensor\dloop{b}+\dloop{b,b}\tensor\dloop{b}\dloop{b}+\dloop{b}\dloop{b}\dloop{b,b}\tensor
\efor.
\end{multline*}
\end{example}

The coproduct of $H_\F$ is the dual of the Grossman--Larson product as defined in \cite{CK98}.
Both $C_{\AAA}$ and $H_\F$ are graded connected coalgebras where the grading is induced by the number of vertices of the graphs, and $(C_{\AAA})_0 = (H_\F)_0 =\kk \efor \simeq \kk$.

Since $C_{\AF} = C_{\AAA} \otimes H_{\F}$, we can identify
\[(C_{\AAA} \otimes H_{\F})^\ast \simeq \Hom_{\kk}(H_{\F}, C_{\AAA}^\ast),\]
where $\Hom_{\kk}(H_{\F}, C_{\AAA}^\ast)$ denotes linear functions from $H_{\F}$ to $C_{\AAA}^\ast$, in the usual way: $a\from C_{\AAA} \otimes H_{\F} \to \kk$ is mapped to $a^\top \from H_{\F} \to  C_{\AAA}^\ast$ by
\begin{equation}
\langle a^\top(\phi), c \rangle = a(c\tensor \phi).
\label{eq:xlaw}
\end{equation}
$H_\F$ with the commutative concatenation product is an algebra, while $C_{\AAA}^\ast$ is
the algebra of series in the variables $\AAA^\prime$.

In other words, series indexed by aromatic forests can be identified with linear
maps between algebras $H_{\F}\to C_{\AAA}^\ast$.
We will see later that series corresponding to algebra morphisms $H_{\F}\to
C_{\AAA}^\ast$, or $C_{\AAA}$\emph{-valued characters of }$H_{\F}$, play a special role.

\begin{definition}
An \emph{admissible partition} $p_\phi$ of an aromatic forest $\phi$ is a partition of the graph $\phi$ into two (possibly empty) subgraphs $R(p_\phi)$ and $P^\ast(p_\phi)$, such that no edge in $\phi$ go from $R(p_\phi)$ to $P^\ast(p_\phi)$.
\end{definition}
The graded coproduct is defined on aromatic forests as
\begin{equation}
\Delta_{\AF}(\phi) = \sum_{(\phi)}\phi_{(1)} \tensor \phi_{(2)} =  \sum_{p_\phi} P^\ast(p_\phi) \tensor R(p_\phi),
\label{eq:coprod}
\end{equation}
and extended linearly to $C_\AF$.

$\Delta_\AF$ is calculated for small aromatic forests in Appendix \ref{app: tables}.

The counit is defined by $\epsilon(\phi)=1$ if $\phi=\efor$, $\epsilon(\phi)=0$ otherwise.
\begin{theorem}
$(C_{\AF}, \Delta_\AF, \epsilon)$ is a graded coalgebra,
\end{theorem}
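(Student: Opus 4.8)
The plan is to first reformulate an admissible partition as a purely combinatorial object and then verify the three coalgebra axioms (counit, coassociativity, compatibility with the grading) directly on representatives. An admissible partition $p_\phi$ is the same data as an ordered splitting of the vertex set $V(\phi)=S\sqcup T$, where $S=V(P^\ast(p_\phi))$ and $T=V(R(p_\phi))$, together with the requirement that $P^\ast(p_\phi)$ and $R(p_\phi)$ be the \emph{induced} subgraphs on $S$ and $T$; the condition ``no edge from $R$ to $P^\ast$'' then becomes the single requirement that no edge of $\phi$ run from a vertex of $T$ to a vertex of $S$. I would begin by observing that this description is invariant under graph isomorphism: any isomorphism carries admissible splittings to admissible splittings and induced subgraphs to isomorphic induced subgraphs, so the right-hand side of \eqref{eq:coprod} does not depend on the chosen representative and $\Delta_\AF$ is well defined on $C_\AF$. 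Gradedness is then immediate, since $|S|+|T|=|\phi|$ for every splitting, so $\Delta_\AF$ maps the degree-$n$ component into $\bigoplus_{i+j=n}(C_\AF)_i\tensor(C_\AF)_j$, while $\epsilon$ is supported on $\efor$ in degree $0$.

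For the counit axiom I would isolate the surviving terms. Applying $\id\tensor\epsilon$ to \eqref{eq:coprod} kills every term except the one with $R(p_\phi)=\efor$, i.e. $T=\emptyset$ and $S=V(\phi)$; this splitting is vacuously admissible and returns $\phi$. Symmetrically, $\epsilon\tensor\id$ retains only the splitting with $S=\emptyset$, again giving $\phi$. Hence $(\epsilon\tensor\id)\Delta_\AF=\id=(\id\tensor\epsilon)\Delta_\AF$.

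Coassociativity is the heart of the argument, and the strategy is to show that $(\Delta_\AF\tensor\id)\Delta_\AF$ and $(\id\tensor\Delta_\AF)\Delta_\AF$ are both equal to a common sum over ordered \emph{tripartitions}. Concretely, I claim both equal
\[\sum_{V(\phi)=U_1\sqcup U_2\sqcup U_3} \phi|_{U_1}\tensor\phi|_{U_2}\tensor\phi|_{U_3},\]
where $\phi|_{U}$ denotes the induced subgraph on $U$ and the sum ranges over ordered partitions into three (possibly empty) blocks such that no edge of $\phi$ runs from $U_j$ to $U_i$ whenever $j>i$. For the left-hand side, the outer coproduct produces a splitting $V=S\sqcup T$ with no edge $T\to S$, and the inner coproduct, applied to $P^\ast=\phi|_S$, further splits $S=S_1\sqcup S_2$ with no edge $S_2\to S_1$ inside $\phi|_S$. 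Here I would use two elementary facts: an induced subgraph of an induced subgraph is again the induced subgraph, so $(\phi|_S)|_{S_i}=\phi|_{S_i}$, and an edge lies in $\phi|_S$ precisely when both endpoints lie in $S$, so the inner admissibility condition is simply ``no edge $S_2\to S_1$ in $\phi$''. Combining, the three resulting blocks $U_1=S_1$, $U_2=S_2$, $U_3=T$ are constrained exactly by ``no edge $T\to S_1$, no edge $T\to S_2$, no edge $S_2\to S_1$''. Running the same computation on the right-hand side, the inner coproduct acts on $R=\phi|_T$ and yields blocks $U_1=S$, $U_2=T_1$, $U_3=T_2$ constrained by ``no edge $T_1\to S$, no edge $T_2\to S$, no edge $T_2\to T_1$''. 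In both cases the constraints say exactly that no edge runs from a later block to an earlier one, so after relabelling the two expressions coincide term by term, proving coassociativity.

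The only genuine obstacle is the bookkeeping in this last step, namely checking that the two families of admissibility conditions generated by the two bracketings really describe the same set of tripartitions. The reason they do is structural rather than computational: the admissibility condition constrains only edges \emph{between} distinct blocks, and ``induced subgraph'' is transitive, so cutting in two stages — whether the second cut refines the $P^\ast$-part or the $R$-part — imposes precisely the pairwise conditions for the three blocks in their natural order. Once this is observed, no case analysis on the cycle-or-tree structure of the components of $\phi$ is needed, and the grading and counit checks above complete the proof.
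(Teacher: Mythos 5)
Your proof is correct and follows essentially the same route as the paper: coassociativity via identifying both bracketings with the sum over ordered three-block splittings with no edge from a later block to an earlier one, and the counit axiom by observing that only the two trivial splittings survive. Your write-up is in fact slightly more careful than the paper's one-line sketch, which lists only the conditions ``no edge $\phi_3\to\phi_2$, no edge $\phi_2\to\phi_1$'' and omits the third pairwise condition ``no edge $\phi_3\to\phi_1$'' that, as you correctly note, both bracketings impose and which is needed for the two sides to match.
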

\begin{proof}
 That $\Delta_\AF$ is coassociative follows from the observation that $(\Delta\tensor \id) \circ \Delta$ and $(\id\tensor \Delta) \circ \Delta$ both can be interpreted as the splitting of $\phi$ in three parts $\phi_1, \phi_2, \phi_3$ such that no edges goes from $\phi_3$ to $\phi_2$ nor from $\phi_2$ to $\phi_1$.
The counital property follows from
\[\sum_{p_\phi} \epsilon(P^*(p_\phi))R(p_\phi) = \epsilon(\efor)\phi = \phi,\]
and
\[\sum_{p_\phi} P^*(p_\phi)\epsilon(R(p_\phi)) = \phi \epsilon(\efor) = \phi.\]

Finally, the grading is induced by the number of nodes of $\phi$.
\end{proof}
We note that as a vector space $C_\AF = C_\AAA \tensor H_\F$, but $\Delta_\AF$
is \emph{not} the coproduct inherited from this product.

$C_{\AF}$ can be made into an commutative, but not co-commutative Hopf algebra by adjoining the commutative concatenation
product $\sqcup$.
However, this structure is not as closely connected to aromatic B-series integrators as
the Connes--Kreimer Hopf algebra is to B-series integrators, as will be seen in
section \ref{sec:complaw}.

\begin{theorem}
$(C_{\AF}, \sqcup, \efor, \Delta_{\AF}, \epsilon),$ is a graded, connected, commutative
bi{-}algebra, thus also a commutative Hopf algebra.
\label{thm:AFHopf}
\end{theorem}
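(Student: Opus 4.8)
The plan is to check the bialgebra axioms by hand and then appeal to the standard fact that a graded connected bialgebra carries a unique antipode. First I would observe that the concatenation (disjoint union) product $\sqcup$ is manifestly associative and commutative, with the empty forest $\efor$ as two-sided unit, so $(C_\AF, \sqcup, \efor)$ is an algebra; since the coalgebra structure $(C_\AF, \Delta_\AF, \epsilon)$ was established in the previous theorem, the only genuine work is the compatibility between product and coproduct, after which gradedness and connectedness make the antipode automatic.

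The heart of the argument is that $\Delta_\AF$ is an algebra morphism $C_\AF \to C_\AF \tensor C_\AF$, where the target carries the componentwise product $(a\tensor b)\sqcup(c\tensor d) = (a\sqcup c)\tensor(b\sqcup d)$; concretely, $\Delta_\AF(\phi \sqcup \psi) = \Delta_\AF(\phi)\sqcup\Delta_\AF(\psi)$. I would prove this by exhibiting a bijection between the admissible partitions of $\phi \sqcup \psi$ and pairs $(p_\phi, p_\psi)$ of admissible partitions of $\phi$ and of $\psi$. The point is that $\phi \sqcup \psi$ has no edge joining its two components, so any partition of its vertex set restricts to a partition of $\phi$ and a partition of $\psi$, and the admissibility requirement — no edge from $R$ into $P^\ast$ — holds globally if and only if it holds on each component separately. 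Under this bijection $P^\ast(p_{\phi\sqcup\psi}) = P^\ast(p_\phi)\sqcup P^\ast(p_\psi)$ and $R(p_{\phi\sqcup\psi}) = R(p_\phi)\sqcup R(p_\psi)$, so the defining sum in \eqref{eq:coprod} factorizes exactly as required. This step carries all the content; the remaining compatibilities are immediate, since $\phi\sqcup\psi=\efor$ forces $\phi=\psi=\efor$ (giving $\epsilon(\phi\sqcup\psi)=\epsilon(\phi)\epsilon(\psi)$), while the unique empty partition of $\efor$ yields $\Delta_\AF(\efor)=\efor\tensor\efor$ and $\epsilon(\efor)=1$.

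For the grading I would note that $|\phi\sqcup\psi|=|\phi|+|\psi|$, so $\sqcup$ respects the node-counting grading, which combined with the graded coproduct and the identification $(C_\AF)_0 = \kk\efor \simeq \kk$ makes the bialgebra graded and connected. Finally, a graded connected bialgebra is automatically a Hopf algebra: the antipode is built by the recursion $S(\efor)=\efor$ and $S(\phi) = -\phi - \sum S(\phi_{(1)})\sqcup\phi_{(2)}$, the sum ranging over the reduced part of $\Delta_\AF(\phi)$ in which both factors have positive degree, and connectedness guarantees that this recursion on degree terminates. I do not expect a real obstacle here — the only place demanding care is matching the componentwise product on the tensor square against the factorization of admissible partitions, which is exactly where the absence of cross-edges in a disjoint union does the work.
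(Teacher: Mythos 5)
Your proof is correct and follows the same route as the paper, which simply asserts that the compatibility conditions are straightforward to check and invokes the graded-connected-bialgebra-implies-Hopf result (Theorem A.1 / \cite[Exercise 1, p.~238]{Swe69}). You have in fact supplied the detail the paper omits — the factorization of admissible partitions across the disjoint union, which works precisely because no edges cross between the two components — and this is the right justification for the key compatibility $\Delta_{\AF}(\phi\sqcup\psi)=\Delta_{\AF}(\phi)\sqcup\Delta_{\AF}(\psi)$.
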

\begin{proof}
The compatibility conditions are straight-forward to check, and the existence of
an antipode follows from Theorem \ref{thm: bihopf}.
\end{proof}

\section{The composition law}
\label{sec:complaw}

We are now ready to tackle the composition of aromatic S-series.
Recall that for normal S-series, the composition corresponds to the dual of the
coproduct in $H_\F$.

The coproduct in $C_{\AF}$ has been designed so that this correspondence also
holds between aromatic S-series and the coproduct $\Delta_{\AF}$.

In Definition \ref{def:Sseries}, an aromatic S-series was defined by a function
$a\from \AF\to \kk$.
By a slight misuse of notation we extend $a$ linearly to a linear function
$a\from C_\AF\to \kk$.

\begin{theorem}
  Let $a,b \in C_\AF^\ast$, $f$ a smooth vector field on $W$, and $g$ a
  smooth function on $W$.
Then
\begin{equation}
S_f(b)[S_f(a)[g]] =S_f(b\cdot a)[g],
\label{eq:compprodS}
\end{equation}
where
\begin{equation}
b\cdot a(\phi) = (b\tensor a)(\Delta_{\AF}(\phi))= \sum_{(\phi)} b(P^*(p_\phi))a(R(p_\phi)).
\label{eq:compprod}
\end{equation}
\label{thm: compprod}
\end{theorem}

\begin{proof}
The composition of two aromatic S-series $S_f(b)\circ S_f(a)$ is a series of compositions of
elementary differential operators.

Using Lemma \ref{lem:compprod1}, the composition of two elementary differential
operators becomes a finite sum of elementary differential operators.
Thus, the composition of two aromatic S-series is an aromatic S-series.

What needs to be proven is that this aromatic S-series is equal to the S-series
$S_f(b\cdot a)$, where $b\cdot a$ is given by the convolution product \eqref{eq:compprod}.

The technique used in the proof is to write both sides of \eqref{eq:compprodS} as
a \emph{triple} sum over aromatic forests, where the coefficients are non-zero
if and only if the third forest $\psi$ occurs as a term in the composition of the first two
$\phi_2\circ \phi_1$, or
equivalently that $\phi_1\tensor\phi_2$ occurs as a term in $\Delta_{\AF}(\psi)$.
Finally, the non-zero terms are shown to be equal by a use of the orbit-isotropy
theorem \cite[Theorem 16.16]{Fraleigh}

We start with the composition $S_f(b)[S_f(a)[g]$:
\[S_f(b)[S_f(a)[g]] = \sum_{\phi_2\in \AF} \frac{b(\phi_2)}{\sigma(\phi_2)} F_f(\phi_2)\left[\sum_{\phi_2\in \AF} \frac{a(\phi_1)}{\sigma(\phi_1)} F_f(\phi_1)[g]\right].\]
Using Lemma \ref{lem:compprod1}, we rewrite
\begin{equation}
\begin{aligned}
S_f(b)&[S_f(a)[g]] = \sum_{\phi_2\in \AF} \frac{b(\phi_2)}{\sigma(\phi_2)} F_f(\phi_2)\left[\sum_{\phi_1\in \AF} \frac{a(\phi_1)}{\sigma(\phi_1)} F_f(\phi_1)[g]\right]\\
                  &= \sum_{\phi_2\in \AF}\sum_{\phi_1\in \AF}\frac{b(\phi_2)a(\phi_1)}{\sigma(\phi_2)\sigma(\phi_1)}  F_f(\phi_2\circ \phi_1)[g] \\
                  &= \sum_{\phi_2\in \AF}\sum_{\phi_1\in \AF} \sum_{\rho\in (V(\phi_1)\union \star)^{r(\phi_2)}} \frac{b(\phi_2)a(\phi_1)}{\sigma(\phi_1)\sigma(\phi_2)} F_f(\Phi(\phi_2, \phi_1, \rho))[g]\\
                  &=\sum_{\phi_1\in \AF}\sum_{\phi_2\in \AF} \sum_{\psi\in \AF} \frac{M_1(\phi_1, \phi_2, \psi)}{\sigma(\phi_1)\sigma(\phi_2)}b(\phi_2)a(\phi_1) F_f(\psi)[g],
\end{aligned}
\label{eq:compprod1}
\end{equation}
where $M_1(\phi_1, \phi_2, \psi)$ is the multiplicity of $\psi$ in
\[\phi_2 \circ \phi_1 = \sum_{\rho\in (V(\phi_1)\union
    \star)^{r(\phi_2)}}\Phi(\phi_2, \phi_1, \rho).\]

We continue with $S_f(b\cdot a)[g]$:
\begin{equation}
\begin{aligned}
S_f(b\cdot a)[g] &=\sum_{\psi \in \AF} \frac{b\cdot a(\psi)}{\sigma(\psi)}F_f(\psi)[g] \\
                 &=\sum_{\psi \in \AF}  \sum_{(\psi)} \frac{b(\psi_{(1)})a(\psi_{(2)})}{\sigma(\psi)}F_f(\psi)[g]\\
                 &= \sum_{\phi_1\in \AF}\sum_{\phi_2\in \AF} \sum_{\psi\in \AF} \frac{M_2(\phi_1, \phi_2,\psi)}{\sigma(\psi)} b(\phi_1)a(\phi_2)F_f(\psi)[g],
\label{eq:compprod2}
\end{aligned}
\end{equation}
where $M_2(\phi_1, \phi_2,\psi)$ is the multiplicity of $\phi_1\tensor \phi_2$ in $\Delta_C(\psi)$.

The claim is true if
\[\frac{M_1(\phi_1, \phi_2, \psi)}{\sigma(\phi_1)\sigma(\phi_2)}=\frac{M_2(\phi_1, \phi_2,\psi)}{\sigma(\psi)},\]
for all $\phi_1,\phi_2,\psi$.
We proceed by proving this equality.

(1) $M_1=0 \Leftrightarrow M_2=0$: If edges can be added from $\phi_2$ to $\phi_1$ to form $\psi$, the splitting of $\psi$ into $\phi_1\tensor \phi_2$ is an admissible partition and vice versa.

(2) If $M_1,M_2 \neq 0$, $\frac{\sigma(\phi_1)\sigma(\phi_2)}{M_1(\phi_1, \phi_2, \psi)}$ and
$\frac{\sigma(\psi)}{M_2(\phi_1, \phi_2,\psi)}$ are equal to the cardinality of subgroups of, respectively, $G_{\phi_1}\times G_{\phi_2}$ and $G_\psi$ and we will prove that these subgroups are isomorphic.

For the remainder of the proof, let $\phi_1, \phi_2, \psi,\rho', p'_\psi$ be such that
\[\Phi(\phi_2, \phi_1, \rho')=\psi\quad\text{and}\quad  P^{\ast}(p'_\psi) \tensor R(p'_\psi)= \phi_1 \tensor \phi_2.\]

Let $G_{\phi_1}\times G_{\phi_2}$ act on $(V(\phi_1)\union \star)^{r(\phi_2)}$ from the left by setting
\[(g_1, g_2)\cdot \rho (p)= g_1 \circ \rho \circ g_2^{-1}(p),\]
where we define $g_1(\star)=\star$.

Now, $\Phi(\phi_2, \phi_1, \rho') \simeq \Phi(\phi_2, \phi_1, (g_1, g_2)\cdot \rho')$ as graphs, and $M_1(\phi_1, \phi_2, \psi)$ is the size of the orbit of $\rho'$ under the action of $G_{\phi_1}\times G_{\phi_2}$.
The orbit-isotropy theorem yields that $\frac{\sigma(\phi_1)\sigma(\phi_2)}{M_1(\phi_1, \phi_2, \psi)}$ is the size of the isotropy group of $\rho'$ under the same action.
We call this group $H_1$.

The admissible partition of $\psi$, $p_\psi$ is fully defined by the set of vertices forming the subgraph $R(p_\psi)$.
We let $G_\psi$ act on the set of admissible partitions of $\psi$ by its action on the subgraph $R(\psi)$.
(That is, by considering the simultaneous action on the set of vertices $R(\psi)$.)
Since $G_\psi$ acts on $\psi$ by graph isomorphisms, the restriction of the action to a subgraph is a graph isomorphism of the subgraph.
Therefore, $R(g\cdot p'_\psi) \simeq R(p'\psi)$ and $P^\ast(g\cdot p'_\psi) \simeq P^\ast(p'\psi)$ as graphs, and $M_2(\phi_1,\phi_2,\psi)$ is the size of the orbit of $p'_\psi$ under the action of $G$.
The orbit-isotropy theorem yields that $\frac{\sigma(\psi)}{M_2(\phi_1, \phi_2,\psi)}$ is the size of the isotropy subgroup of $p'_\psi$ under the same action. We call this group $H_2$.

We now prove that $H_1 \simeq H_2$ as groups.
Let $(g_1,g_2)\in H_1$ i.e. $g_1 \circ \rho' = \rho' \circ g_2\lvert_{r(\phi_2)}$.
Now, define the action of $(g_1,g_2)$ on $\psi=\Phi(\phi_2, \phi_1, \rho')$ by letting $g_1,g_2$ act on the subgraphs $\phi_1,\phi_2$.
This is a graph automorphism of $\psi$ because for $(r, \rho'(r))\in E_\rho'$, $(g_2(r), g_1\circ \rho'(r))=(g_2(r),\rho' \circ g_2(r))\in E_\rho'$.
It is in $H_2$ because  $(g_1,g_2)$ maps $R(p'_\psi) \simeq \phi_1$ to $R(p'_\psi)$.

The converse isomorphism $H_2 \to H_1$ follows from considering $g\in H_2$ and doing the appropriate restrictions to get $(g_1,g_2)\in G_{\phi_1}\times G_{\phi_2}$.
For $(p, \rho(p))\in E(\psi)$ , we get $(g_2(p), g_1\circ \rho(p)) \in E(\psi)$.
As $g_2(p)\in \phi_2$, $g_1\circ \rho(p)\in \phi_1$, we must have $g_1 \circ \rho(p)=\rho \circ g_2(p)$.
Therefore $(g_1,g_2)\in H_1$.

As $H_1 \simeq H_2$, we get
\[\frac{\sigma(\phi_1)\sigma(\phi_2)}{M_1(\phi_1, \phi_2,
    \psi)}=|H_1|=|H_2|=\frac{\sigma(\psi)}{M_2(\phi_1, \phi_2,\psi)},\]
and the claim follows.
\end{proof}

Theorem \ref{thm: compprod} says that the composition product $C_{\AF}^\ast \tensor C_{\AF}^\ast \to C_{\AF}^\ast$ is the dual of the coproduct in $C_{\AF}$.
As the dual space of a graded connected coalgebra, the invertible elements of $C_{\AF}^\ast$ are $\{a\in C_{\AF}^\ast \text{ s.t. } a(\efor)\neq 0\}$,
and the inverse is given recursively by
\[a^{-1}(\efor)=1/a(\efor), \qquad a^{-1}(\phi)=-\frac{a(\phi)+\sum_{(\phi)}a(\phi')a^{-1}(\phi'')}{a(\efor)},\]
where $\sum_{(\phi)}\phi'\tensor \phi''= \Delta(\phi)-\efor \tensor \phi - \phi \tensor \efor$ is the reduced coproduct.

We proceed by describing the aromatic S-series arising from aromatic B-series
integrators,
that is, the S-series that correspond to formally expanding $g(y+B_f(a)(y))$ as
a Taylor series around $y$.

Recall that for ordinary B-series, these correspond to $\kk$-valued characters of $H_\F$.

However for aromatic B-series, the corresponding elements turn out not to be
characters of $C_{\AF}$ with the natural algebra structure described in Theorem
\ref{thm:AFHopf}.
Rather, they are algebra morphisms $H_\F\to C_\AAA^{\ast}$.

The proof closely follows a similar proof for B-series in \cite{murua99}.

\begin{theorem}
Extend $a\from C_\AT \to \kk$ to a function $\kappa(a)\from C_\AF \to \kk$ by requiring that $\kappa(a)$ and $a$ coincide on $\AT$, and that $\kappa(a)^\top\from H_{\F} \to  C_{\AAA}^\ast$, defined as in \eqref{eq:xlaw}, is an algebra morphism.
Then $g(y+B_f(a)(y))= S_f(\kappa(a))[g](y)$.
\label{thm:Sser}
\end{theorem}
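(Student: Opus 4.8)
The plan is to Taylor-expand $G$ about $y$, substitute the aromatic B-series, and regroup the resulting sum of elemental differential operators according to the aromatic forest each term produces; the scalar identity that falls out of this regrouping is exactly the algebra-morphism condition defining $\bar{a}$. Before starting I would record that $\bar{a}$ is well defined: since $H_\F$ is the free commutative algebra generated by the trees $\T$ under concatenation, an algebra morphism $\bar{a}'\from H_\F \to C_\AAA^\ast$ is determined by its values on generators, and the requirement that $\bar{a}$ agree with $a$ on $\AT = \AAA\times\T$ fixes these through $\langle \bar{a}'(t),\gamma\rangle = a(\gamma t)$ for $t\in\T$. In particular $\bar{a}'(\efor)=\epsilon_\AAA$, so $\bar{a}(\efor)=1$ and $\bar{a}(\gamma)=0$ for every nonempty rootless $\gamma\in\AAA'$.

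Writing $v=B_f(a)[y]$ and using smoothness of $G$, I would expand formally
\[ G(y+v)=\sum_{k=0}^\infty \frac{1}{k!}\,G^{(k)}(y)\big[v^{\tensor k}\big]. \]
Substituting $v=\sum_{\tau\in\AT}h^{|\tau|}\tfrac{a(\tau)}{\sigma(\tau)}F_f(\tau)(y)$ and multiplying out $v^{\tensor k}$ gives a sum over ordered tuples $(\tau_1,\dots,\tau_k)\in\AT^k$. By \Fref{def:elemdiff}, contracting the symmetric tensor $G^{(k)}(y)$ against $F_f(\tau_1)\tensor\cdots\tensor F_f(\tau_k)$ is precisely $F_f(\tau_1\cdots\tau_k)[G]$, where $\tau_1\cdots\tau_k\in\AF$ is the disjoint union whose roots are the roots of the factors; the symmetry of $G^{(k)}$ makes this independent of the ordering. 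Hence
\[ G(y+v)=\sum_{k=0}^\infty\frac{1}{k!}\sum_{(\tau_1,\dots,\tau_k)\in\AT^k}h^{\sum_j|\tau_j|}\frac{\prod_j a(\tau_j)}{\prod_j\sigma(\tau_j)}\,F_f(\tau_1\cdots\tau_k)[G]. \]

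Collecting, for each fixed $\phi\in\AF$, all tuples with $\tau_1\cdots\tau_k=\phi$, it remains to prove the identity $\tfrac{\bar{a}(\phi)}{\sigma(\phi)}=\sum_{k\ge0}\tfrac{1}{k!}\sum_{\tau_1\cdots\tau_k=\phi}\tfrac{\prod_j a(\tau_j)}{\prod_j\sigma(\tau_j)}$. Here I would use the factorization $\phi=\gamma\omega$ with $\gamma\in\AAA$, $\omega\in\F$, together with $\sigma(\gamma\omega)=\sigma(\gamma)\sigma(\omega)$ (rooted and rootless components never map to one another) and, for $\tau_j=\gamma_j t_j\in\AT$, the relations $\sigma(\tau_j)=\sigma(\gamma_j)\sigma(t_j)$ and $a(\tau_j)=\langle\bar{a}'(t_j),\gamma_j\rangle$. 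Since each $\tau_j$ carries exactly one tree, the constraint $\tau_1\cdots\tau_k=\phi$ forces $k$ to be the number of roots of $\phi$, the multiset $\{t_1,\dots,t_k\}$ to equal $\omega$, and $\gamma_1\cdots\gamma_k=\gamma$. For a fixed ordering of the trees, summing over aroma factorizations $\gamma_1\cdots\gamma_k=\gamma$ is, by associativity of the series product defining $C_\AAA^\ast$ and the morphism property $\bar{a}'(t_1)\cdots\bar{a}'(t_k)=\bar{a}'(\omega)$, exactly $\tfrac{1}{\sigma(\gamma)}\langle\bar{a}'(\omega),\gamma\rangle$. The outer sum over the $k!/\prod_s n_s!$ orderings of $\omega=\prod_s s^{n_s}$, weighted by $\prod_j\sigma(t_j)^{-1}=\prod_s\sigma(s)^{-n_s}$ and the Taylor factor $1/k!$, then rebuilds $\sigma(\omega)=\prod_s n_s!\,\sigma(s)^{n_s}$, yielding $\langle\bar{a}'(\omega),\gamma\rangle/(\sigma(\gamma)\sigma(\omega))=\bar{a}(\phi)/\sigma(\phi)$.

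The main obstacle is precisely this last bookkeeping of symmetry factors: one must verify that the three sources of multiplicity — the Taylor factor $1/k!$, the number of orderings of the trees of $\omega$, and the aroma splittings encoded in the $C_\AAA^\ast$-product — combine to reproduce exactly $\sigma(\phi)=\sigma(\gamma)\sigma(\omega)$. This is where the aromatic case genuinely departs from \cite{murua99}, since the aroma contributions are absorbed into the nontrivial product on $C_\AAA^\ast$ rather than appearing as inert scalar factors; the remaining structure parallels Murua's argument.
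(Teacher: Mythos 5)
Your argument is correct and follows essentially the same route as the paper's proof: Taylor expansion of $G$, identification of $G^{(k)}(F_f(\tau_1),\dotsc,F_f(\tau_k))$ with $F_f(\tau_1\dotsm\tau_k)[G]$, and the symmetry-factor bookkeeping $\sigma(\gamma\omega)=\sigma(\gamma)\sigma(\omega)$ and $\sigma(\omega)=\prod_s n_s!\,\sigma(s)^{n_s}$ combined with the algebra-morphism property of $\bar a'$. The only difference is organizational: the paper first collapses the aroma sums into scalar coefficient functions $S_f(a'(\theta))$ and then Taylor-expands over trees, whereas you expand over aromatic trees and resolve the aroma factorizations afterwards at the coefficient level via the product on $C_{\AAA}^\ast$ --- the two computations are identical.
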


\begin{proof}
We have that
$\AT= \AAA \times \T$, so we can rewrite the sum
\[
B_f(a) =\sum_{\tau \in \AT} \frac{a(\tau)}{\sigma(\tau)} F_f(\tau) = \sum_{\theta \in \T}\sum_{\gamma\in \AAA} \frac{a(\gamma\theta)}{\sigma(\gamma\theta)} F_f(\gamma\theta).
\]
Now, we make two observations: (i) $F_f(\gamma\theta)= F_f(\gamma)F_f(\theta)$. (ii) The automorphism group of $\gamma\theta$ cannot mix connected components with 0 and 1 root, so $\sigma(\gamma\theta)=\sigma(\gamma)\sigma(\theta)$. Collecting terms, we can write
\[
B_f(a) = \sum_{\theta \in \T} \left[ \sum_{\gamma\in \AAA } \frac{a(\gamma\theta)}{\sigma(\gamma)} F_f(\gamma) \right]\frac{1}{\sigma(\theta)} F_f(\theta).
\]
The sum inside the brackets is a formal series of scalar functions and can be expressed as series indexed by the aromas $\AAA$:
\[S_f(a^\top(\theta))=\sum_{\gamma \in \AAA}
  \frac{a(\gamma\theta)}{\sigma(\gamma)} F_f(\gamma),
\]
where $a^\top(\theta)\from C_\AAA\to \kk$ is defined by
$\langle a^\top(\theta), \gamma\rangle=a(\gamma\theta)$ as in \eqref{eq:xlaw}.

We can now write
\[
B_f(a) = \sum_{\theta \in \T} \frac{S_f(a^\top(\theta))}{\sigma(\theta)} F_f(\theta).
\]
We see that this is simply the expression for a regular B-series, with the
constant coefficient $a(\theta)$ replaced by a $y$-dependent scalar value
$S_f(a^\top(\theta))$.

Now, let $g$ be a smooth function, and expand $g(y+B_f(a)(y))$ using Taylor's theorem.
\[
  g(y+B_f(a)(y)) = g(y) + \sum_{k=1}^{\infty} \frac{1}{k!}g^{(k)}(y)(B_f(a)(y), \dotsc , B_f(a)(y)).
\]
The multi-linearity of $g^{(k)}(y)$ allows us to rewrite the above equation.
Suppressing the dependence of $y$, we get
\begin{align*}
&g(y+B_f(a)(y)) -g(y)=\\
&=\sum_{k=1}^\infty \sum_{(u_1, \dotsc, u_k) \in \T^{k}} \frac{ S_f(a^\top(u_1)) \dotsm S_f(a^\top(u_k))}{k! \sigma(u_1)\dotsm \sigma(u_k)} g^{(k)}(F_f(u_1), \dotsc, F_f(u_k))\\
           &= \sum_{k=1}^\infty \sum_{u_1\dotsm u_k\in \F_k} \frac{ S_f(a^\top(u_1)) \dotsm S_f(a^\top(u_k))}{\mu_1!\dotsm \mu_\nu! \sigma(u_1)\dotsm\sigma(u_k)}F_f(u_1\dotsm u_k)[g].
\end{align*}
In the above equation, $u_1\dotsm u_k$ is a $k$-tuple of rooted trees, $\nu$ is the number of distinct trees among $u_1,\dotsc, u_k$, and $\mu_1,\dotsc,\mu_\nu$ are the multiplicities of each distinct tree.

If $\phi = u_1\dotsm u_m$, the symmetry group $G_\phi$ is the semidirect product of permuting identical trees among $u_1,\dotsc, u_m$ and $G_{u_1}\times \dotsb \times G_{u_m}$, so
\[\sigma(\phi)=\mu_1!\dotsm \mu_\nu! \sigma(u_1)\dotsm\sigma(u_k).\]

We define $\kappa(a)^\top\from H_\F \to C_\AAA^\ast$ to be an algebra morphism by
\[\kappa(a)^\top(\phi)=a^\top(u_1)\dotsm a^\top(u_k), \]
where the product is taken in $C_\AAA^*$.
For $\phi=\efor$, this simplifies to the empty product $\kappa(a)^\top(\efor) = \epsilon_{\AAA}$.

For any $\gamma_1, \gamma_2 \in \AAA$, we have
$F_f(\gamma_1\gamma_2)=F_f(\gamma_1)F_f(\gamma_2)$.
Consequently, we have $S_f(\kappa(a)^\top(\phi))= S_f(a^\top(u_1)) \dotsm S_f(a^\top(u_k))$, and
\[g(y+B_f(a)(y))=\sum_{\phi \in \F}
  \frac{S_f(\kappa(a)^\top(\phi))(y)}{\sigma(\phi)}F_f(\phi)[g](y). \qedhere
\]
\end{proof}

The aromatic S-series $\kappa(a)$ appearing in Theorem \ref{thm:Sser} are, in
general, not characters of $(C_{\AF}, \sqcup)$ with values in $\kk$. To see
this, consider the aromatic graph $\dloop{b}\forest{b}$.
The requirement that $\kappa(a)^\top\from C_{\F} \to  C_{\AAA}^\ast$ is an algebra
morphism implies that
\[\kappa(a)(\dloop{b}) = \langle \kappa(a)^\top(\efor), \dloop{b}\rangle = \langle \epsilon_{\AAA}, \dloop{b} \rangle = 0,\]
on the other hand, $\kappa(a)(\dloop{b}\forest{b})=a(\dloop{b}\forest{b})$ can be nonzero.

The S-series in Theorem \ref{thm:Sser} \emph{are} characters of $H_\F$ with values in $C_\AAA^*$, and such characters form a group when equipped with the convolution product
\[a^\top\cdot b^\top \from \tau \mapsto \sum_{(\tau)}a^\top(\tau_{(1)})\cdot b^\top(\tau_{(2)}).\]
However, the composition product of aromatic S-series described in Theorem \ref{thm: compprod} is a different product.

\section{Substitution law}
\label{sec:sublaw}
In this section, we study the substitution law,
that is, the operation of substituting the vector field $f$ in an aromatic S-series with
another vector field\footnote{or formal sum of vector fields.} $\tilde{f}$
itself expressed as a B-series $\tilde{f}=B_f(b)$.

The combinatorial formulas and algebraic properties for the subsitiution law in
the case of ordinary B-series were described in \cite{CHV10,CEM11}.
In the latter, the substitution law is also described as the dual of a coproduct in a
bialgebra $\tilde{H}$.

In the present paper, only the combinatorial formulas are described for aromatic
B-series.

\begin{definition}
A \emph{partition} $p^\phi$ of an aromatic forest $\phi$ is a partition of the graph $\phi$ into subgraphs $P(p^\phi)=\{\theta_1, \dotsc, \theta_m\}$ such that $\theta_1, \dotsc, \theta_m \in \AT$.
We view $P(p^\phi)$ as a multiset of aromatic trees.
The set of partitions of $\phi$ is denoted $\mathcal{P}(\phi)$.
The \emph{skeleton} $\chi(p^\phi)$ of a partition is the aromatic forest which is obtained by collapsing each of the subgraphs in $P(p^\phi)$ to a single node.
\end{definition}

Constructively, a partition is defined by cutting a subset of the edges in
$\phi$, obtaining a set of connected components, and then adjoining each
rootless connected component, (i.e. aroma) to a connected component with root (i.e. a rooted tree).

\begin{definition}
For the purposes of the proof, we also define a \emph{labeled partition}
$p^{\ast\phi}$ as a partition where we assign each subgraph $\theta_1,\dotsc,
\theta_m$ to a vertex of the skeleton $\chi(p^{\ast\phi})$.
For a labeled partition, we let $P(p^{*\phi})= (\theta_1, \dotsc, \theta_m)$ be
a sequence of aromatic trees.
The set of labeled partitions of $\phi$ is written as $\mathcal{P}^{*}(\phi)$.
\label{def: labelpart}
\end{definition}
For a partition $p^\phi$, the number of possible labellings is
$\sigma(\chi(p^\phi))$.

We illustrate partitions and labeled partitions with an example.
\begin{example}
Let $\phi$ be the aromatic graph:
\[\begin{tikzpicture}
\begin{scope}[etree, scale = 1.5]

\placeroots{3}
\children[1]{child{node(i){}}
             child{node(j){}}}
\children[3]{child{node(k){}}}
  \jointrees{1}{2}
  \end{scope}
  \node[above] at (i) {$1$};
  \node[above] at (j) {$2$};
  \node[above] at (k) {$3$};
  \node[below] at (tree1) {$4$};
  \node[below] at (tree2) {$5$};
  \node[below] at (tree3) {$6$};
\end{tikzpicture}
\]

\begin{enumerate}[(i)]

\item Cutting the edge $4\to 5$ leaves the connected subgraphs
\[\begin{tikzpicture}
\begin{scope}[etree, scale = 1]
  \placeroots{1}
  \children[1]{child{node(n1){}}
               child{node(n2){}}
               child{node(n5){}}}
         \end{scope}
  \node[above] at (n1) {$1$};
  \node[above] at (n2) {$2$};
  \node[above] at (n5) {$5$};
  \node(base)[below] at (tree1) {$4$};
  \node[below=1ex] at (base) {$\theta_1$};
\end{tikzpicture}
\qquad
\begin{tikzpicture}
  \begin{scope}[etree, scale=1]
    \placeroots{1}
    \children[1]{child{node(n3){}}}
  \end{scope}
  \node[above] at (n3) {$3$};
  \node(base)[below] at (tree1) {$6$};
  \node[below=1ex] at (base) {$\theta_2$};
\end{tikzpicture},
\]
which both are trees.
The skeleton is
\[
\begin{tikzpicture}
  \begin{scope}[etree, scale=1]
  \placeroots{2}
  \jointrees{1}{1}
  \end{scope}
  \node[above] at (tree1) {$\theta_1$};
  \node[below] at (tree2) {$\theta_2$};
\end{tikzpicture}.
\]

\item Cutting the edge $2 \to 4$ leaves the connected subgraphs
\[
\begin{tikzpicture}
\begin{scope}[etree, scale = 1]
\placeroots{2}
\children[1]{child{node(n1){}}}
\jointrees{1}{2}
  \end{scope}
  \node[above] at (n1) {$1$};
  \node(base)[above left] at (tree1) {$4$};
  \node[above] at (tree2) {$5$};
  \node[below=2ex] at (tree1) {$\lambda$};
\end{tikzpicture}
\qquad
\begin{tikzpicture}
\begin{scope}[etree, scale=1]
\placeroots{1}
\end{scope}
\node[above] at (tree1) {$2$};
\node[below=2ex] at (tree1) {$\tau_1$};
\end{tikzpicture}
\quad
\begin{tikzpicture}
\begin{scope}[etree, scale=1]
\placeroots{1}
\children[1]{child{node(n3){}}}
\end{scope}
\node[above] at (n3) {$3$};
\node[below] at (tree1) {$6$};
\node[below=2ex] at (tree1) {$\tau_2$};
\end{tikzpicture}.
\]
The connected components are two trees and an aroma, and there is a choice in
which rooted tree to adjoin the aroma to.
If we choose $P(\phi)=\{\lambda \tau_1, \tau_2\}$,
then the skeleton is
\[
\begin{tikzpicture}
  \begin{scope}[etree, scale=1]
  \placeroots{2}
  \jointrees{1}{1}
  \end{scope}
  \node[above] at (tree1) {$\lambda\tau_1$};
  \node[below] at (tree2) {$\tau_2$};
\end{tikzpicture}.
\]

If we choose $P(\phi)=\{\tau_1, \lambda\tau_2\}$,
then the skeleton is
\[
  \begin{tikzpicture}
    \begin{scope}[etree,scale=1]
      \placeroots{1}
      \children[1]{child{node(lt1){}}}
    \end{scope}
    \node[above] at (lt1) {$\tau_1$};
    \node[below] at (tree1) {$\lambda\tau_2$};
  \end{tikzpicture}.
\]
\end{enumerate}

\end{example}

\begin{theorem}
Let $b\from \AT \to \kk$ be an aromatic B-series and $a\from \AF \to \kk$ be an aromatic S-series, then $S_{B_f(b)}(a)$ can be expressed as an aromatic B-series in $f$,  $S_{B_f(b)}(a)=S_f(b\star a)$ with coefficients
\begin{equation}
b\star a(\phi) =\sum_{p^\phi\in \mathcal{P}(\phi)} a(\chi(p^\phi))  \prod_{\theta\in S(p^\phi)}b(\theta).
\label{eq:subprod}
\end{equation}
\label{thm: sublaw}
\end{theorem}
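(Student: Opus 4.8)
The plan is to start from the definition $S_{B_f(b)}(a)[G]=\sum_{\chi\in\AF}\frac{a(\chi)}{\sigma(\chi)}F_{B_f(b)}(\chi)[G]$ and to analyse the single operator $F_{\tilde f}(\chi)$ obtained by substituting the vector field $\tilde f=B_f(b)$ for $f$. Since $F_f(\chi)=\prod_{v\in V(\chi)}f^{i_v}_{I_{\pi(v)}}\prod_{s\in r(\chi)}\partial_{i_s}$ is a product of one factor per node together with the root derivatives, performing the substitution amounts to replacing each component $f^{i_v}$ by $\tilde f^{i_v}=\sum_{\theta\in\AT}\frac{b(\theta)}{\sigma(\theta)}F_f(\theta)^{i_v}$ and then carrying out the derivatives $I_{\pi(v)}$ dictated by $\chi$. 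I would expand everything by the Leibniz rule, exactly in the spirit of the proof of \fref{lem:compprod1}.

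Next I would make the combinatorial identification precise. After choosing an aromatic tree $\theta_v\in\AT$ at each node $v$ of $\chi$, the product rule forces each derivative index coming from an edge $(v,w)$ of $\chi$ to land on exactly one factor of $F_f(\theta_w)$; referring to the definition of $F_f$, this attaches a single edge from $\mathrm{root}(\theta_v)$ to one node of $\theta_w$, while each root $s$ of $\chi$ turns $\mathrm{root}(\theta_s)$ into a root of the output. The resulting aromatic forest $\phi$ is precisely one that carries a labelled partition $p^{\ast\phi}$ with skeleton $\chi(p^{\ast\phi})=\chi$ and parts $\theta_v$, since collapsing each $\theta_v$ recovers the edges and roots of $\chi$. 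This substitution analogue of \fref{lem:compprod1} yields
\[ S_{B_f(b)}(a)[G]=\sum_{\chi}\frac{a(\chi)}{\sigma(\chi)}\sum_{(\theta_v)_v}\prod_{v}\frac{b(\theta_v)}{\sigma(\theta_v)}\sum_{\rho}F_f\bigl(\phi(\chi,(\theta_v),\rho)\bigr)[G], \]
where $\rho$ runs over the choices of attaching node (the routings). I would then collect the terms according to the isomorphism class of the output $\phi$.

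The heart of the argument is the matching of symmetry factors. For a fixed representative graph $\phi$, the triples $(\chi,(\theta_v),\rho)$ whose output is isomorphic to $\phi$ correspond to labelled partitions of $\phi$, so the coefficient of $F_f(\phi)$ in the expression above is $\sum\frac{a(\chi)}{\sigma(\chi)}\prod_v\frac{b(\theta_v)}{\sigma(\theta_v)}$ taken over those triples, and the goal is to prove this equals $\frac{1}{\sigma(\phi)}\sum_{p^\phi\in\mathcal{P}(\phi)}a(\chi(p^\phi))\prod_{\theta\in P(p^\phi)}b(\theta)$. I would establish this by an orbit--stabilizer argument mirroring \fref{thm: compprod}: let $G_\phi$ act on the set $\mathcal{P}^{\ast}(\phi)$ of labelled partitions of $\phi$. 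Each unlabelled partition $p^\phi$ accounts for $\sigma(\chi(p^\phi))$ labellings, which absorbs the factor $1/\sigma(\chi)$, whereas the automorphisms internal to the parts together with those permuting isomorphic parts recombine the product $\prod_v\sigma(\theta_v)$ with $\sigma(\phi)=|G_\phi|$ via the orbit--stabilizer theorem applied to this action.

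I expect the symmetry-factor bookkeeping in this last step to be the main obstacle. Unlike the composition law, where only two blocks $\phi_1,\phi_2$ had to be matched, here the automorphism group of the output $\phi$ simultaneously mixes the internal automorphisms of the several parts, the permutations of isomorphic parts, and the automorphisms of the skeleton $\chi$; disentangling these three contributions and showing that the $\sigma$-weighted count over isomorphism classes collapses to the single normalization $1/\sigma(\phi)$ over unlabelled partitions is the delicate point, and is precisely where an explicit group isomorphism analogous to the $H_1\simeq H_2$ of \fref{thm: compprod} must be produced.
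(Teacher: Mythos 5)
Your proposal follows essentially the same route as the paper: expand $F_{B_f(b)}(\chi)$ by substituting $\tilde f=B_f(b)$ at each node and applying the Leibniz rule, identify the resulting forests with labelled partitions of the output $\phi$ (one attaching choice per edge of the skeleton), and then match the normalizations $\frac{1}{\sigma(\chi)\prod_v\sigma(\theta_v)}$ against $\frac{1}{\sigma(\phi)}$ by an orbit--stabilizer argument. The one step you flag as delicate but do not carry out --- the explicit isomorphism of the two stabilizers --- is exactly how the paper concludes: the isotropy group of a labelled partition under $G_\phi$ and the isotropy group of a routing $\mathbbm{j}$ under $G_{\theta_1}\times\dotso\times G_{\theta_m}$ are both identified with the subgroup that preserves each part $\theta_i$ and fixes those vertices whose in-degree in $\phi$ exceeds their in-degree in $\theta_i$.
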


We prove the theorem for $a=\gamma^\ast$ where $\gamma$ is an arbitrary aromatic
forest with $|\gamma|=m$.
The full theorem follows from linearity in $a$.

Let $\tilde{f}=B_f(b)$.
The method used in the proof consists of writing $F_{\tilde{f}}(\gamma)$ as an
$m$-linear function $\EE_\gamma$ evaluated at $m$ copies of $\tilde{f}$ and its derivatives.

This function is then expanded by writing $\tilde{f}=B_f(b)$ as a
sum over $\AT$.
The result is an $m$-tuple sum over aromatic
trees $\AT$.
The terms in this sum are of the form
\[\EE_\gamma(F_f(\theta_1), F_f(\theta_2), \dotsc, F_f(\theta_m))
\]
where $\theta_1, \dotsc,\theta_m$ are aromatic trees, and $\EE_\gamma$ specifies
how derivatives of its arguments shall be combined to form a differential
operator.
This resulting differential operator $\EE_\gamma(F_f(\theta_1), F_f(\theta_2),
\dotsc, F_f(\theta_m))$ is eventually an affine equivariant function of $f$ and
a finite linear combination of elementary differentials of the form $F_f(\phi)$.

Then equal terms in the entire sum are collected.
Proving equality to $S_f(b\star a)$ is done by an application of the
orbit-isotropy theorem.

\begin{proof}
By the definition of S-series (Definition \ref{def:Sseries})
$S_{\tilde{f}}(a) =\frac{1}{\sigma(\gamma)}F_{\tilde{f}}(\gamma)$,
and, by the definition of elementary differential operator (Definition
\ref{def:elemdiff})
\[F_{\tilde{f}}(\gamma)=\EE_{\gamma}(\tilde{f}, \tilde{f}, \dotsc,
  \tilde{f})=\prod_{j\in V(\gamma)} \tilde{f}^{i_j}_{I_{\pi(j)}} \prod_{k\in
    r(\gamma)} \partial_k,\]
where Einstein's summation convention is used.

Now,
\[\tilde{f}=\sum_{\theta\in \AT} \frac{b(\theta)}{\sigma(\theta)}F_f(\theta),\]
so
\[\begin{aligned}
    F_{\tilde{f}}(\gamma)&=\sum_{\theta_1} \sum_{\theta_2}\dotsi
    \sum_{\theta_m}\frac{b(\theta_1)b(\theta_2)\dotsm
      b(\theta_m)}{\sigma(\theta_1)\sigma(\theta_2)\dotsm \sigma(\theta_m)}
    \EE_\gamma(F_f(\theta_1), F_f(\theta_2), \dots, F_f(\theta_m))\\
     &=\sum_{\theta_1} \dotsi
    \sum_{\theta_m}\prod_{j\in V(\gamma)} \frac{b(\theta_j)}{\sigma(\theta_j)}F_f(\theta_j)^{i_j}_{I_{\pi(j)}}\prod_{k\in r(\gamma)}\partial_k,
   \end{aligned}
 \]
 where each of the sums goes over $\theta_k\in \AT$.
In the above equation, each of $F_f(\theta_j)^{i_j}_{I_{\pi(j)}}$ corresponds to taking
derivatives of the elementary differential operators $F_f(\theta)$.

These derivatives are then combined according to $\EE_\gamma$.
The resulting expression is a sum of elementary differential operators
corresponding to aromatic forests.

The graphs of these aromatic forests are formed from the graphs $\theta_1, \dotsc, \theta_m$ in the following manner:
For each edge $e_l=(u, v)$ in $E(\gamma)$, choose a vertex in $\theta_{v}$ and add an edge from the root of $\theta_{u}$ to that vertex.
The sum is then taken over all possible choices of vertices.

We can denote the aromatic forest formed by a particular choice of vertices by
\[\Gamma(\theta_1, \dotsc, \theta_m; j_{1}, \dotsc, j_{n}) = \Gamma(\Theta;\mathbbm{j}).\]
where $j_1, j_2, \dotsc j_n$ are the vertices chosen for edges $e_1, \dotsc, e_n \in E(\gamma)$.
Recall that for $e_l =(u,v)$, we are restricted to choosing $j_l$ in $\theta_{v}$.

With this notation,
\[S_{\tilde{f}}(a)= \frac{1}{\sigma(\gamma)} \sum_{\Theta \in \AT^m} \sum_{\mathbbm{j}}\frac{\prod_i b(\theta_i)}{\prod_i \sigma(\theta_i)}  F_f(\Gamma(\Theta; \mathbbm{j})),\]
where the sum $\sum_{\mathbbm{j}}$ goes over all possible choices of vertices.

We now collect equal terms.
Collecting $\mathbbm{j}$ for which $\Gamma(\Theta; \mathbbm{j})=\phi$ are equal, we get
\begin{equation}
S_g(a)= \frac{1}{\sigma(\gamma)} \sum_{\Theta \in \AT^m} \sum_{\phi} M_1(\gamma,\Theta, \phi)\frac{\prod_ib(\theta_i)}{\prod_i \sigma(\theta_i)}  F_f(\phi),
\label{eq:sublaw1}
\end{equation}
where $M_1(\gamma, \Theta, \phi)$ counts the number of $\mathbbm{j}$ such that $\Gamma(\Theta; \mathbbm{j})=\phi$.

What we seek to prove is that $S_g(a)$ is equal to
\[ S_f(b\star a)=\sum_{\phi} \frac{1}{\sigma(\phi)}\sum_{\substack{p^\phi \in \mathcal{P}(\phi)\\ \chi(p^\phi)=\gamma}} \prod_{\theta\in S(p^\phi)} b(\theta)F_f(\phi).\]

We rewrite this by extending the sum over all labeled partitions, and dividing
by $\sigma(\gamma)$ to compensate.
(Recall Definition \ref{def: labelpart})

\[S_f(b\star a)=\sum_{\phi}\frac{1}{\sigma(\phi)\sigma(\gamma)} \sum_{\substack{p^{*\phi} \in \mathcal{P^*}(\phi)\\ \chi(p^{*\phi})=\gamma}} \prod_{\theta\in S(p^{*\phi})} b(\theta)F_f(\phi).\]
Collecting terms for which $P(p^{*\phi})=(\theta_1, \dotsc, \theta_m) = \Theta$, we get
\begin{equation}
B_f(b\star a)=\sum_{\phi}\frac{1}{\sigma(\phi)\sigma(\gamma)} \sum_{\Theta}M_2(\gamma, \Theta, \phi) \prod_i b(\theta_i)F_f(\phi),
\label{eq:sublaw2}
\end{equation}
where $M_2(\gamma, \Theta, \phi)$ counts the number of labeled partitions $p^{*\phi}$ such that $P(p^{*\phi})=\Theta$.

Comparing \eqref{eq:sublaw1} with \eqref{eq:sublaw2}, $S_g(a)=S_f(b\star a)$ provided that
\begin{equation}
  \frac{M_2(\gamma, \Theta, \phi)}{\sigma(\phi)}=\frac{M_1(\gamma, \Theta, \phi)}{\prod_i \sigma(\theta_i)}.
  \label{eq:Meqsublaw}
\end{equation}
We proceed by proving that, when nonzero, either side of \eqref{eq:Meqsublaw}
can be identified with the reciprocal of the cardinality of an isotropy
subgroup.
Furthermore, we will show that the two isotropy subgroups are isomorphic.

We begin by showing that $M_1$ and $M_2$ are nonzero simultaneously.

Let $\gamma, \Theta=(\theta_1, \dotsc, \theta_m), \phi$ be given.
If there is no way of adding edges to the graphs $\theta_1, \dotsc, \theta_m$ that results in $\phi$, there is also no way to obtain $\theta_1, \dotsc, \theta_m$ by cutting edges in $\phi$ and vice versa.
In this case, $M_1=M_2=0$.

If non-zero, we can identify each side of the equation \eqref{eq:Meqsublaw} with the
reciprocal of the cardinality of subgroups of symmetry groups:

(i) $G_\phi$ acts on labeled partitions of $\phi$ in a natural manner.
Let $p^{*\phi}$ be such that $\chi(p^{*\phi})=\gamma$, and $P(p^{*\phi})=(\theta_1, \dotsc, \theta_m)$.
The action of $G_\phi$ conserves both $\chi(p^{*\phi})$ and $P(p^{*\phi})=(\theta_1, \dotsc, \theta_m)$, and all partitions satisfying $P(p^{\ast \phi})= \Theta$ is obtained in this manner.
Therefore, $M_2(\gamma, \Theta, \phi)$ is the size of the orbit of $p^{*\phi}$ under the $G_\phi$ action.

By the orbit-isotropy theorem, $\frac{\sigma(\phi)}{M_2(\gamma, \Theta, \phi)}$ is the size of the isotropy subgroup of $p^{*\phi}$, that is: the subgroup of $G_\phi$ which does not mix the vertex subsets corresponding to $\theta_1, \dotsc, \theta_m$, and in addition, within each vertex subset $V(\theta_i)$, any vertex whose in-degree is higher in $\phi$ than in $\theta_i$, has to be fixed under the subgroup.

(ii) $\mathbbm{j}$ denotes a sequence of vertices in $\theta_1, \dotsc, \theta_m$ with some restrictions,
$G_{\theta_1}\times \dotsb \times G_{\theta_m}$ acts on the set of such sequences by moving the vertices within each subgraph.
Let $\mathbbm{j}$ be such that $\Gamma(\Theta; \mathbbm{j})=\phi$
The action of $G_{\theta_1}\times \dotsb \times G_{\theta_m}$ preserves $\Gamma(\Theta; \mathbbm{j})$, and all sequences satisfying the requirement are obtained in this manner.
Therefore $M_1(\gamma, \Theta, \phi)$ is the size of the orbit of $\mathbbm{j}$  under the action of $G_{\theta_1}\times \dotsb \times G_{\theta_m}$.

By the orbit-isotropy theorem, $\frac{\prod_i \sigma(\theta_i)}{M_1(\gamma, \Theta, \phi)}$ is the size of the isotropy group of $\mathbbm{j}$ under this action, that is the subgroup of
$G_{\theta_1}\times \dotsb \times G_{\theta_m}$ which, in each component preserves $\mathbbm{j}\cap V(\theta_i)$.

Since the vertices in $\mathbbm{j}$ are exactly those which have higher in-degree in $\phi$ than in $\theta_i$, the two subgroups described in (i) and (ii) are isomorphic.

\end{proof}

The composition and substitution laws are two algebraic operations on aromatic
S-series.
The natural question is how they interact.

For B-series, \cite{CHV10} describes the interaction in terms of the operations
themselves.
The same properties hold for aromatic S-series, as summed up in the following proposition.

\begin{proposition}
\begin{enumerate}[(a)]
\item $(C_{\AT}^\ast, \star)$ forms a monoid with identity $\forest{b}^\ast$ which acts on $C_{\AF}^\ast$ by algebra morphisms, in particular, the following identities hold for $a,a_1,a_2 \in C_{\AF}^\ast$, $b,c\in C_{\AT}^*$, $\alpha_1,\alpha_2 \in \kk$.
\begin{equation}
\begin{aligned}
b\star(\alpha_1 a_1 + \alpha_1 a_1) &= \alpha_1 b \star a_1 + \alpha_2 b\star a_1,\\
c \star (b\star a) &= (c\star b)\star a,\\
b\star \forest{b}^\ast&= b,\\
\forest{b}^\ast \star a &= a,\\
b\star (a_1 \cdot a_2) &= (b\star a_1)\cdot (b\star a_2),\\
b\star a^{-1} &= (b\star a)^{-1}.
\end{aligned}
\end{equation}
\item The set $\{b\in C_{\AT}^\ast \text{ s.t. } b(\forest{b})\neq 0\}$ with operation $\star$ forms a group.
\end{enumerate}
\end{proposition}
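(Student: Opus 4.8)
The plan is to read the substitution product $\star$ through its operational meaning established in \fref{thm: sublaw}: for $b \in C_{\AT}^\ast$ and $a \in C_{\AF}^\ast$ one has $S_{B_f(b)}(a) = S_f(b\star a)$, so $b\star a$ is exactly the coefficient function obtained by replacing the vector field $f$ by the B-series field $B_f(b)$. All six identities in (a) then become transparent statements about iterated substitution and its interaction with the composition product, and I would convert them back into coefficient identities using the independence of the elemental differential operators $\{F_f(\phi)\}_{\phi\in\AF}$, i.e. the faithfulness of $S_f$, which is the only non-combinatorial input I invoke. Two preliminary observations make everything fit together. First, a partition $p^\phi$ and its skeleton $\chi(p^\phi)$ have the same number of roots: each block is an aromatic tree, and the root of a block has no outgoing edge in $\phi$ (hence is a root of $\phi$ and yields a root of $\chi$) exactly when the corresponding skeleton node has no outgoing skeleton edge; so $\star$ maps $C_\AT^\ast\times C_\AT^\ast$ into $C_\AT^\ast$ and the monoid is well defined. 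Second, for $a\in C_\AT^\ast$ the operator $S_g(a)$ is first order and equals the derivation given by $B_g(a)$, so the substitution theorem specializes to the B-series identity $B_{B_f(c)}(b)=B_f(c\star b)$.

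Next I would dispatch the elementary identities. Linearity in $a$ is immediate from \fref{eq:subprod}, which is $\kk$-linear in $a(\chi(p^\phi))$. For the left identity, $\forest{b}^\ast(\theta)=0$ unless $\theta=\forest{b}$, so in $\forest{b}^\ast\star a$ only the finest partition survives (all edges cut, every block a single node); its skeleton is $\phi$ itself, giving $\forest{b}^\ast\star a = a$. Dually, $b\star\forest{b}^\ast$ selects the unique partition whose skeleton is $\forest{b}$, namely the one-block partition $\theta_1=\phi$, which exists precisely when $\phi\in\AT$ and then contributes $b(\phi)$; hence $b\star\forest{b}^\ast=b$. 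The same bookkeeping gives $b\star\efor^\ast=\efor^\ast$ (only the empty partition has empty skeleton) and $(b\star a)(\efor)=a(\efor)$, which I need for inverses.

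For associativity and the morphism property I argue operationally. Applying $S_{B_g(b)}(a)=S_g(b\star a)$ with $g=B_f(c)$ and then the substitution theorem once more gives $S_{B_{B_f(c)}(b)}(a)=S_{B_f(c)}(b\star a)=S_f\bigl(c\star(b\star a)\bigr)$; combining this with $B_{B_f(c)}(b)=B_f(c\star b)$ yields $S_f\bigl((c\star b)\star a\bigr)=S_f\bigl(c\star(b\star a)\bigr)$, and faithfulness of $S_f$ gives associativity. For the morphism property I use \fref{thm: compprod}: since $f\mapsto B_f(b)$ distributes over operator composition, $S_f\bigl((b\star a_1)\cdot(b\star a_2)\bigr)=S_{B_f(b)}(a_1)\circ S_{B_f(b)}(a_2)=S_{B_f(b)}(a_1\cdot a_2)=S_f\bigl(b\star(a_1\cdot a_2)\bigr)$, whence $b\star(a_1\cdot a_2)=(b\star a_1)\cdot(b\star a_2)$. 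Inverse preservation is then formal: applying this to $a\cdot a^{-1}=\efor^\ast$ together with $b\star\efor^\ast=\efor^\ast$ gives $(b\star a)\cdot(b\star a^{-1})=\efor^\ast$, and since $(b\star a)(\efor)=a(\efor)\neq0$ the element $b\star a$ is invertible, so $b\star a^{-1}=(b\star a)^{-1}$.

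Finally, part (b). By (a) the set $\{b : b(\forest{b})\neq0\}$ is a submonoid: $\forest{b}^\ast(\forest{b})=1\neq0$, and the one-block partition of $\forest{b}$ gives $(c\star b)(\forest{b})=c(\forest{b})\,b(\forest{b})$, so the condition is preserved. To invert, I grade $C_\AT^\ast$ by the number of vertices and isolate in $(c\star b)(\tau)$ the unique term linear in $c(\tau)$, the one-block partition contributing $b(\forest{b})\,c(\tau)$; every other partition of $\tau$ has at least two blocks, each of size $<|\tau|$, so the remaining terms depend only on the values of $c$ on strictly smaller trees. Imposing $c\star b=\forest{b}^\ast$ therefore determines $c$ uniquely by recursion on $|\tau|$, with base case $c(\forest{b})=1/b(\forest{b})\neq0$, yielding a left inverse inside the subset. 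Since a monoid in which every element has a left inverse is a group, (b) follows. I expect the one genuinely non-formal step to be the faithfulness of $S_f$ used to pass from operator identities to coefficient identities in the associativity and morphism arguments; a reader wanting a self-contained route can instead prove associativity combinatorially via the evident bijection between nested partitions of $\phi$ computing the two sides.
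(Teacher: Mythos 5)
Your proposal is correct and follows essentially the same route as the paper: part (a) is deduced from the operational identities $S_f(b\star a)=S_{B_f(b)}(a)$ and $S_f(a\cdot b)=S_f(b)\circ S_f(a)$, and part (b) by recursively constructing a one-sided $\star$-inverse supported on the trivial-partition term $b(\forest{b})c(\tau)$ and invoking the standard monoid argument. You merely supply details the paper leaves implicit (closure of $\star$ on $C_{\AT}^\ast$ via root-counting of skeletons, the unit computations, and the reliance on faithfulness of $S_f$ to pass from operator identities to coefficient identities), and your ``left inverse'' is the paper's ``right inverse'' up to naming convention, which does not affect the argument.
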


\begin{proof}
\begin{enumerate}[(a)]
\item All of the formulas follows from $S_f(b\star a)= S_{B_f(b)}(a)$, and
$S_f(a\cdot b)= S_f(b)\circ S_f(a)$.
\item From (a) we know that $\forest{b}^\ast$ is both a right and left identity on $C_{\AT}^\ast$.
The right inverse of an element $b\in  C_{\AT}^\ast$, $b(\forest{b}) \neq 0$ is given by
\[ \begin{aligned}b^{\star-1}(\forest{b}) &= \frac{1}{ b(\forest{b})}, \\
b^{\star-1}(\tau)&=- \frac{1}{b(\forest{b})}\sum_{\substack{p^\phi\in \mathcal{P}(\phi)\\ p^\phi \neq \text{trivial}}} b(\chi(p^\phi))  \prod_{\theta\in P(p^\phi)}b^{\star-1}(\theta),
\end{aligned}
\]
where in the last sum, the trivial partition with $\chi(p^\phi)=\forest{b}$, $P(p^\phi)=\tau$ is omitted.
All elements in $\{b\in C_{\AT}^\ast \text{ s.t. } b(\forest{b})\neq 0\}$ allow a right inverse in the same set, therefore a right inverse is also a left inverse. \end{enumerate}
\end{proof}

In the nonaromatic case, the interaction between the composition and
substitution products was described on the bialgebraic side via two bialgebras
interacting.

The description of the substitution law as the dual of a coproduct in a fitting
bialgebra and the
corresponding interaction between $C_{\AT}$ and this bialgebra remains unsolved.

\subsection{Divergence of aromatic B-series}
A special case of the substitution law is the divergence of an aromatic
B-series.
Recall that $F_f(\dloop{b})=\tr{f'}=\ddiv f$,
so the divergence of $f$ is given by $S_f(\dloop{b}^*)=F_f(\dloop{b})$.

Let $\tilde{f}=B_f(b)$ be an aromatic B-series.
By Theorem \ref{thm: sublaw} the divergence of $\tilde{f}$ is given by
\[
\begin{aligned}
\ddiv \tilde{f}=& S_{\tilde{f}}(\dloop{b}^*)\\
               =& S_f(b\star \dloop{b}^*),
\end{aligned}
\]
and we define $\nabla b \coloneqq b\star \dloop{b}^\ast$.

By \eqref{eq:subprod}, we have
\[
\nabla b(\phi) =\sum_{p^\phi\in \mathcal{P}(\phi)} \dloop{b}^*(\chi(p^\phi))  \prod_{\theta\in S(p^\phi)}b(\theta).
\]
In this case, we only get non-zero values when $\chi(p^\phi)=\dloop{b}$, thus
$\phi$ can not have any roots, and $P(p^\phi)$ can only consist of a single
aromatic tree.

\[\nabla b(\phi)= \sum_{v \in V(\phi)} b(\phi_v),\]
where $\phi_v$ is the aromatic tree obtained by deleting the edge going out of $v$.
This formula, specialized for B-series, appears in the papers by Iserles,
Quispel and Tse \cite{IQT07}, and the paper by Chartier and Murua \cite{CM07}.

Now, assume that $f$ is a divergence-free vector field $\ddiv f = F_f(\dloop{b})= 0$, and that we apply an aromatic B-series method (i.e. an affine-equivariant, local numerical method) to the differential equation $\dot{y} = f(y)$.
The modified vector field given by $B_f(b)$, and the method is volume-preserving if
$\nabla b  = b\star \dloop{b}^*$ is non-zero only on $\phi \in \AT$ that have $\dloop{b}$ as a subgraph.

\section{Affine-equivariant pseudo-volume-preserving integrators}
\label{sec:final}
One question raised by Munthe-Kaas and Verdier \cite{MV15} is the existence of
volume-preserving aromatic B-series methods.

Volume-preserving methods are well known in the literature, see for instance \cite{quispel95,KS95},
but all such methods rely on some partition of the vector field along coordinate
axes, and are therefore not equivariant under invertible affine maps, and are
therefore not aromatic B-series methods.

It is known that B-series methods  cannot be volume-preserving (apart from the
exact integrator) \cite{IQT07, CM07} but that aromatic B-series methods can be \cite{MV15}.
However, it has so far not been possible to find an equation defining the update map for a volume-preserving aromatic B-series method.

While this quesiton remains unanswered,
with the computational tools developed in this paper, one can at least find
methods that are pseudo-volume-preserving.
That is, that methods that preserves the volume with a higher order of accuracy
than the order of the method itself.

One approach is as follows:

Let $\tilde{f}$ be a preprocessed vector field
\[\tilde{f}=B_f(b),\]
where $B_f(b)$ is an expression that can be efficiently computed from $f$,
for example a linear combination involving $f$, $(f')f$, $(f')^2$,
$\tr\left((f')^2\right)f, \dotsc$.
Then integrate
\[
\dot{x}=\tilde{f}(x)
\]
using a standard Runge--Kutta method or other B-series method with B-series $B_f(a)$.

If $B_f(b)$ is chosen properly, the resulting integrator can preserve the volume
form to higher order than the original B-series method given by $B_f(a)$.

An example of a method which is second order, but preserves volume to fourth order is detailed below.

\begin{example}
\label{exa: pseudo}

We base our method on the implicit midpoint method, whose modified vector field is $B_f(a)$, where
\[a = \forest{b}^* + \frac{1}{12} \left(\forest{b[b[b]]}^*-\forest{b[b,b]}^*\right) + \text{remainder.}\]
Here the remainder contains terms of order 5 and higher.

And define a pre-processed vector field by
\[\tilde{f}(y)=f(y)+\frac{h^2}{12}\left( \frac{1}{2} \tr (f'(y)^2) - f'(y)^2\right)f(y).\]
In the notation of aromatic B-series, $\tilde{f}=B_f(b)$ where
\[b =\forest{b}^*+ \frac{1}{12} \dloop{b,b}\forest{b}^\ast-\frac{1}{12}\forest{b[b[b]]}^\ast.\]

The integrator is defined by substituting $F$ for $f$ in the implicit midpoint method
\begin{equation}y_{n+1} = y_n + hF\left(\frac{y_n+y_{n+1}}{2}\right).
\label{eq:integrator}
\end{equation}
Implementing this method requires evaluating the Jacobian $f'(y)$ and solving a non-linear equation.

The modified vector field of \eqref{eq:integrator} is $B_f(b\star a)$ and the divergence of the modified vector field is $B_f(b\star a \star \dloop{b}^*)$, which we can calculate for the  smallest rootless graphs.
Using \eqref{eq:subprod},
\[b\star a = \forest{b}^*+\frac{1}{12}\left(\dloop{b,b}\forest{b}^*-\forest{b[b,b]}^*\right)+ \text{remainder,}\]
and
\[b\star a \star \dloop{b}^* = \dloop{b}^* -\frac{1}{12} \dloop{b[b,b]}^*+ \text{remainder,}\]
where the remainders contain terms of 5th order or higher.

If $f$ is divergence free, $F_f$ sends any aromatic forest containing the subgraph $\dloop{b}$ to 0,
so we see that while $B_f(b\star a)$ agrees to $f$ to second order, $\ddiv B_f(b\star a)= S_f(b\star a \star \dloop{b}^*)$ disappears to fourth order.
The integrator \eqref{eq:integrator}, while second order, conserves volume to fourth order.

\end{example}

\section*{Acknowledgments}
The research on this paper was partially supported by the the Norwegian Research Council.

The graphs of the aromatic trees were generated in two different ways:
Most of the trees were generated using Håkon Marthinsen's \texttt{planarforest} package \url{http://hmarthinsen.github.io/planarforest/} and the author's modification thereof. These in turn use the \texttt{pythontex}\cite{poore13} package.
The large tree in example \ref{exa: eldiffexample} was generated using Olivier Verdier's \texttt{etrees} package.

I would also like to thank the anonymous referees for many useful comments and
suggestions, and the editors of the issue for the encouragement in finalizing
this article.

\newpage
\appendix
\section{Tables for composition and substitution laws}
\label{app: tables}
In this appendix, we provide tables for the composition and substitution laws on
aromatic B-series.
\begin{table}[h!]
\centering
\begin{tabular}{c|c}
$\phi$ & $\Delta_{\AT}(\phi)$\\
\hline
$\efor$ & $\efor\tensor \efor$ \\
$\forest{b}$ & $\efor\tensor \forest{b} + \forest{b} \tensor \efor$ \\
$\dloop{b}$ & $\efor \tensor \dloop{b} + \dloop{b} \tensor \efor$ \\
$\forest{b[b]}$ & $\efor \tensor \forest{b[b]} + \forest{b} \tensor \forest{b}+ \forest{b[b]} \tensor \efor$ \\
$\dloop{b[b]}$ & $\efor \tensor \dloop{b[b]}+ \forest{b} \tensor \dloop{b} + \dloop{b[b]} \tensor \efor$\\
$\dloop{b,b}$ & $\efor \tensor \dloop{b,b} + \dloop{b,b} \tensor \efor$ \\
$\forest{b[b,b]}$ & $\efor \tensor \forest{b[b,b]} + 2 \forest{b} \tensor \forest{b[b]}+ \forest{b,b}\tensor \forest{b} + \forest{b[b,b]} \tensor \efor$ \\
$\forest{b[b[b]]}$ & $\efor \tensor \forest{b[b[b]]} + \forest{b}\tensor \forest{b[b]}+ \forest{b[b]}\tensor \forest{b}+ \forest{b[b[b]]}\tensor\efor$\\
$\dloop{b[b,b]}$ & $\efor \tensor \dloop{b[b,b]} + 2 \forest{b} \tensor \dloop{b[b]}+ \forest{b,b}\tensor \dloop{b} + \dloop{b[b,b]} \tensor \efor$ \\
$\dloop{b[b[b]]}$ & $\efor \tensor \dloop{b[b[b]]} + \forest{b}\tensor \dloop{b[b]} + \forest{b[b]}\tensor \dloop{b}+ \dloop{b[b[b]]}\tensor\efor$\\
$\dloop{b[b],b}$ & $\efor \tensor \dloop{b[b],b} + \forest{b} \tensor \dloop{b,b} + \dloop{b[b],b} \tensor \efor$\\
$\dloop{b,b,b}$ &  $\efor \tensor \dloop{b,b,b} + \dloop{b,b,b} \tensor \efor$\\
$\forest{b[b,b,b]}$ & $\efor \tensor \forest{b[b,b,b]} + 3\forest{b}\tensor \forest{b[b,b]} + 3 \forest{b,b}\tensor \forest{b[b]}+\forest{b,b,b}\tensor \forest{b} + \forest{b[b,b,b]}\tensor \efor$\\
$\forest{b[b[b],b]}$ & $\forest{b}\tensor\forest{b[b,b]}+ \forest{b}\tensor\forest{b[b[b]]}+ \forest{b[b]}\tensor \forest{b[b]} + \forest{b,b}\tensor \forest{b[b]}+\forest{b[b],b}\tensor \forest{b} + \forest{b[b[b],b]}\tensor \efor$\\
$\forest{b[b[b,b]]}$ &  $\efor \tensor \forest{b[b[b,b]]} + 2 \forest{b} \tensor \forest{b[b[b]]}+\forest{b,b}\tensor \forest{b[b]}+ \forest{b[b,b]}\tensor \forest{b}+ \forest{b[b[b,b]]}\tensor \efor$ \\
$\forest{b[b[b[b]]]}$ & $\efor \tensor \forest{b[b[b[b]]]} + \forest{b}\tensor \forest{b[b[b]]} + \forest{b[b]]}\tensor \forest{b[b]}+\forest{b[b[b]]}\tensor \forest{b} + \forest{b[b[b[b]]]}\tensor \efor$\\
$\dloop{b[b,b,b]}$ & $\efor \tensor \dloop{b[b,b,b]} + 3\forest{b}\tensor \dloop{b[b,b]} + 3 \forest{b,b}\tensor \dloop{b[b]}+\forest{b,b,b}\tensor \dloop{b} + \dloop{b[b,b,b]}\tensor \efor$\\
$\dloop{b[b[b],b]}$ & $\forest{b}\tensor\dloop{b[b,b]}+ \forest{b}\tensor\dloop{b[b[b]]}+ \forest{b[b]}\tensor \dloop{b[b]} + \forest{b,b}\tensor \dloop{b[b]}+\forest{b[b],b}\tensor \dloop{b} + \dloop{b[b[b],b]}\tensor \efor$\\
$\dloop{b[b[b,b]]}$ &  $\efor \tensor \dloop{b[b[b,b]]} + 2 \forest{b} \tensor \dloop{b[b[b]]}+\forest{b,b}\tensor \dloop{b[b]}+ \forest{b[b,b]}\tensor \dloop{b}+ \dloop{b[b[b,b]]}\tensor \efor$ \\
$\dloop{b[b[b[b]]]}$ & $\efor \tensor \dloop{b[b[b[b]]]} + \forest{b}\tensor \dloop{b[b[b]]} + \forest{b[b]]}\tensor \dloop{b[b]}+\forest{b[b[b]]}\tensor \dloop{b} + \dloop{b[b[b[b]]]}\tensor \efor$\\
$\dloop{b[b,b],b}$ & $ \efor \tensor \dloop{b[b,b],b} + 2 \forest{b} \tensor \dloop{b[b],b} + \forest{b,b}\tensor \dloop{b,b}+ \dloop{b[b,b],b}\tensor \efor$\\
$\dloop{b[b[b]],b}$ & $ \efor \tensor \dloop{b[b[b]],b}+\forest{b}\tensor \dloop{b[b],b} + \forest{b[b]}\tensor \dloop{b,b}+ \dloop{b[b[b]],b}\tensor \efor$\\
$\dloop{b[b],b[b]}$ & $\efor \tensor\dloop{b[b],b[b]} + 2\forest{b} \tensor \dloop{b[b],b} + \forest{b,b}\tensor\dloop{b,b}+ \dloop{b[b],b[b]} \tensor \efor$ \\
$\dloop{b[b],b,b}$ & $\efor \tensor \dloop{b[b],b,b} + \forest{b}\tensor \dloop{b,b,b}+\dloop{b[b],b,b}\tensor\efor$\\
$\dloop{b,b,b,b}$ & $\efor \tensor \dloop{b,b,b,b} + \dloop{b,b,b,b} \tensor \efor$
\end{tabular}
\caption{The coproduct $\Delta_{\AT}$}
\label{tab:compprod}
\end{table}

\begin{table}[htp]
\centering
\begin{tabular}{c|c}

$\tau$ & $b\star a(\tau)$\\[1ex]
\hline

  $\forest{b}$ & $a(\forest{b})b(\forest{b})$\\[1ex]

  $\forest{b[b]}$ & $a(\forest{b})b(\forest{b[b]}) +a(\forest{b})b(\forest{b})^2$\\[1ex]

  $\dloop{b}\forest{b}$ & $a(\forest{b})b(\dloop{b}\forest{b})+ a(\dloop{b}\forest{b})b(\forest{b})^2$\\[1.5ex]

  $\forest{b[b,b]}$ & $a(\forest{b})b(\forest{b[b,b]})+2a(\forest{b[b]})b(\forest{b})b(\forest{b[b]})+a(\forest{b[b,b]})b(\forest{b})^3$\\[1ex]

  $\forest{b[b[b]]]}$ & $a(\forest{b})b(\forest{b[b[b]]})+2a(\forest{b[b]})b(\forest{b[b]})b(\forest{b})+a(\forest{b[b[b]]})b(\forest{b})^3$ \\[1.5ex]
  
  $\dloop{b}\forest{b[b]}$ &$a(\forest{b})b(\dloop{b}\forest{b[b]}) +2a(\forest{b[b]})b(\dloop{b}\forest{b})b(\forest{b})+ a(\dloop{b}\forest{b})b(\forest{b[b]})b(\forest{b})+a(\dloop{b}\forest{b[b]})b(\forest{b})^3$\\[1.5ex]
  
  $\dloop{b,b}\forest{b}$ & $a(\forest{b})b(\dloop{b,b}\forest{b})+2a(\dloop{b}\forest{b})b(\forest{b[b]})b(\forest{b})+a(\dloop{b,b}\forest{b})b(\forest{b})^3$\\[1.5ex]

  \multirow{2}{*}{$\dloop{b[b]}\forest{b}$}
  &$a(\forest{b})b(\dloop{b[b]}\forest{b})+ a(\forest{b[b]})b(\dloop{b}\forest{b})b(\forest{b})+a(\dloop{b}\forest{b})b(\dloop{b}\forest{b})b(\forest{b})$\\
  & $+a(\dloop{b}\forest{b})b(\forest{b[b]})b(\forest{b})+a(\dloop{b[b]}\forest{b})b(\forest{b})^3$\\[1.5ex]
  
  $\dloop{b}\dloop{b}\forest{b}$ & $a(\forest{b})b(\dloop{b}\dloop{b}\forest{b})+4a(\dloop{b}\forest{b})b(\dloop{b}\forest{b})b(\forest{b})+a(\dloop{b}\dloop{b}\forest{b})b(\forest{b})^3$\\[1.5ex]

  \multirow{2}{*}{$\dloop{b}\forest{b[b,b]}$}
  &$a(\forest{b})b(\dloop{b}\forest{b[b,b]})+a(\dloop{b}\forest{b})b(\forest{b[b,b]})b(\forest{b})+
   2a(\forest{b[b]})b(\dloop{b}\forest{b})b(\forest{b[b]})+2a(\forest{b[b]})b(\dloop{b}\forest{b[b]})b(\forest{b})$\\
  &$+2a(\dloop{b}\forest{b[b]})b(\forest{b[b]})b(\forest{b})^2+3a(\forest{b[b,b]})b(\dloop{b}\forest{b})b(\forest{b})^2+a(\dloop{b}\forest{b[b,b]})b(\forest{b})^4$\\[1ex]
  
  \multirow{2}{*}{$\dloop{b}\forest{b[b[b]]}$} 
  &$a(\forest{b})b(\dloop{b}\forest{b[b[b]]})+a(\dloop{b}\forest{b})b(\forest{b[b[b]]})b(\forest{b})+2a(\forest{b[b]})b(\dloop{b}\forest{b})b(\forest{b[b]})
   +2a(\forest{b[b]})b(\dloop{b}\forest{b[b]})b(\forest{b})$\\
  &$+2a(\dloop{b}\forest{b[b]})b(\forest{b[b]})b(\forest{b})^2+3a(\forest{b[b[b]]})b(\dloop{b}\forest{b})b(\forest{b})^2+a(\dloop{b}\forest{b[b[b]]})b(\forest{b})^4$\\[1.5ex]
  
  \multirow{2}{*}{$\dloop{b,b}\forest{b[b]}$}
  &$a(\forest{b})b(\dloop{b,b}\forest{b[b]})+2a(\forest{b[b]})b(\dloop{b,b}\forest{b})b(\forest{b})+2a(\dloop{b}\forest{b})b(\forest{b[b]})^2$\\
  &$+2a(\dloop{b}\forest{b[b]})b(\forest{b[b]})b(\forest{b})^2+a(\dloop{b,b}\forest{b})b(\forest{b[b]})b(\forest{b})^2+a(\dloop{b,b}\forest{b[b]})b(\forest{b})^4$\\[1ex]
  
  \multirow{3}{*}{$\dloop{b[b]}\forest{b[b]}$}
  &$a(\forest{b})b(\dloop{b[b]}\forest{b[b]})+a(\forest{b[b]})b(\dloop{b}\forest{b[b]})b(\forest{b})
   +a(\dloop{b}\forest{b})b(\dloop{b}\forest{b})b(\forest{b[b]})+a(\dloop{b}\forest{b})b(\forest{b[b]})b(\forest{b})$\\
  &$+2a(\forest{b[b]})b(\dloop{b[b]}\forest{b})b(\forest{b})+a(\forest{b[b,b]})b(\dloop{b}\forest{b})b(\forest{b})^2+a(\forest{b[b[b]]})b(\dloop{b}\forest{b})b(\forest{b})^2$\\
  &$+a(\dloop{b}\forest{b[b]})b(\dloop{b}\forest{b})b(\forest{b})^2+a(\dloop{b[b]}\forest{b})b(\forest{b[b]})b(\forest{b})^2
    +a(\dloop{b}\forest{b[b]})b(\forest{b[b]})b(\forest{b})^2+a(\dloop{b[b]}\forest{b[b]})b(\forest{b})^4$\\

\end{tabular}
\caption{Substitution law (1 root)}
\label{tab:sublaw1}
\end{table}

\begin{table}[htp]
\centering
\begin{tabular}{c|c}
$\lambda$ & $b\star a(\lambda)$\\[1ex]
  \hline
  
  $\efor$ & $a(\efor)$ \\[1ex]
  
  $\dloop{b}$ & $a(\dloop{b})b(\forest{b})$\\[1ex]
  
  $\dloop{b[b]}$ &$a(\dloop{b})b(\forest{b[b]})+a(\dloop{b})b(\dloop{b}\forest{b})+ a(\dloop{b[b]})b(\forest{b})^2$\\[1ex]
  
  $\dloop{b,b}$ & $2a(\dloop{b})b(\forest{b[b]})+a(\dloop{b,b})b(\forest{b})^2$ \\[1ex]
  
  $\dloop{b}\dloop{b}$ & $2a(\dloop{b})b(\dloop{b}\forest{b})+ a(\dloop{b}\dloop{b})b(\forest{b})^2$ \\[1ex]
  
\multirow{2}{*}{$\dloop{b[b,b]}$} & $2a(\dloop{b})b(\dloop{b[b]}\forest{b})+ a(\dloop{b})b(\forest{b[b,b]})+ 2a(\dloop{b[b]})b(\dloop{b}\forest{b})b(\forest{b})$\\
        & $+ 2a(\dloop{b[b]})b(\forest{b[b]})b(\forest{b})+a(\dloop{b[b,b]})b(\forest{b})^3$ \\[1ex]

\multirow{2}{*}{$\dloop{b[b[b]]}$}  & $ a(\dloop{b})b(\dloop{b[b]}\forest{b})+a(\dloop{b})b(\dloop{b}\forest{b[b]})+a(\dloop{b})b(\forest{b[b[b]]})+a(\dloop{b,b})b(\dloop{b}\forest{b})b(\forest{b})$\\
                   & $+a(\dloop{b[b]})b(\dloop{b}\forest{b})b(\forest{b})+2a(\dloop{b[b]})b(\forest{b[b]})b(\forest{b})+a(\dloop{b[b[b]]})b(\forest{b})^2$\\[1ex]
\multirow{2}{*}{$\dloop{b[b],b}$} & $a(\dloop{b})b(\forest{b[b[b]]})+a(\dloop{b})b(\forest{b[b,b]})+a(\dloop{b})b(\dloop{b,b}\forest{b})$\\
  &$+2a(\dloop{b[b]})b(\forest{b})b(\forest{b[b]})+a(\dloop{b,b})b(\forest{b})b(\forest{b[b]})+a(\dloop{b[b],b})b(\forest{b})^3$\\[1ex]
$\dloop{b,b,b}$ & $3a(\dloop{b}) b(\forest{b[b[b]]})+ 3a(\dloop{b,b})b(\forest{b[b]})b(\forest{b})+a(\dloop{b,b,b})b(\forest{b})^3$ \\[1.5ex]
\multirow{2}{*}{$\dloop{b,b}\dloop{b}$} & $a(\dloop{b})b(\dloop{b,b}\forest{b})+2a(\dloop{b})b(\dloop{b}\forest{b[b]})+2a(\dloop{b}\dloop{b})b(\forest{b[b]})b(\forest{b})$\\
         &$+2a(\dloop{b,b})b(\dloop{b}\forest{b})b(\forest{b})+a(\dloop{b,b}\dloop{b})b(\forest{b})^3$\\[1ex]
\multirow{2}{*}{$\dloop{b[b]}\dloop{b}$} & $a(\dloop{b})b(\dloop{b}\dloop{b}\forest{b})+a(\dloop{b})b(\dloop{b}\forest{b[b]})+a(\dloop{b})b(\dloop{b[b]}\forest{b})+3a(\dloop{b[b]})b(\dloop{b}\forest{b})b(\forest{b})$\\
& $+a(\dloop{b}\dloop{b})b(\forest{b[b]})b(\forest{b})+a(\dloop{b}\dloop{b})b(\dloop{b}\forest{b})b(\forest{b})+a(\dloop{b[b]}\dloop{b})b(\forest{b})^3$\\[1.5ex]
$\dloop{b}\dloop{b}\dloop{b}$ & $3a(\dloop{b})b(\dloop{b}\dloop{b}\forest{b})+ 6a(\dloop{b}\dloop{b})b(\dloop{b}\forest{b})b(\forest{b})+a(\dloop{b}\dloop{b}\dloop{b})b(\forest{b})^3$
\end{tabular}
\caption{Substitution law (0 roots)}
\label{tab:sublaw0}
\end{table}
\FloatBarrier
\section{Coalgebras, bialgebras and Hopf algebras}
\label{app: coalg}
See also \cite{Swe69} for an introduction to coalgebras, bialgebras, Hopf algebras.

Throughout this article, $\kk$ is either $\RR$ or $\CC$, and all tensor products $\tensor$ are taken over $\kk$.
An \emph{associative algebra} over $\kk$ is a vector space $A$ over $\kk$ equipped with an associative linear multiplication map $\mu \from A\tensor A \to A$, $\mu \from a\tensor b \mapsto ab$.
It is \emph{unital} if, in addition, there is a linear map $u\from \kk\to A$, called the \emph{unit} such that for all $\beta\in \kk, a \in A$, $u(\beta)a= au(\beta)=\beta a$.

$A$ is \emph{graded} if $A = \bigoplus_{n=0}^{\infty} A_n$ as a vector space and
$\mu(A_n,A_m) \subset A_{n+m}$.

A \emph{coassociative coalgebra} over $\kk$ is a vector space $C$ equipped with a coproduct $\Delta \from C \to C \tensor C$.
In the notation of Sweedler, we write $\Delta(c)=\sum_{(c)}c_{(1)}\tensor c_{(2)}$.

The coproduct is linear and coassociative, that is
\[(\Delta \tensor I) \circ \Delta = (I \tensor \Delta) \circ \Delta,\]
where $I$ is the identity map on $C$,
or equivalently
\[ \sum_{(c)} \Delta (c_{(1)}) \tensor c_{(2)} = \sum_{(c)} c_{(1)} \tensor \Delta (c_{(2)}). \]

A coalgebra is \emph{counital} if, in addition, there is a linear map $\epsilon \from C \to \kk$, called the \emph{counit} such that for all $c\in C$, $\sum_{(c)} \epsilon( c_{(1)})c_{(2)} = c =  \sum_{(c)} c_{(1)}\epsilon( c_{(2)})$.

The (co)algebras considered in this paper are all both (co)associative and (co){-}unital, and we will omit the qualifiers.

A coalgebra $C$ is \emph{graded} if $C=\bigoplus_{n=0}^\infty C_n$ as a vector space and
$\Delta(C_n) \subset \bigoplus_{k+l=n}C_k\tensor C_l$.
A coalgebra is \emph{connected} if $C_0 \simeq \kk$.

For $V,W$ algebras, respectively coalgebras, $V\tensor W$ is again an algebra,
respectively a coalgebra in a canonical way:
\[(v_1 \tensor w_1)(v_2 \tensor w_2) = v_1v_2 \tensor w_1w_2\]
or
\[\Delta(v\tensor w) = \sum_{(v)}\sum_{(w)} v_{(1)}\tensor w_{(1)}\tensor v_{(2)}\tensor w_{(2)}\]

A \emph{bialgebra} $B$ is simultaneously an algebra and coalgebra such that the maps $\mu \from B \tensor B \to B$ and $u\from \kk \to B$ are morphisms of counital coalgebras and
$\Delta \from B \to B \tensor B$ and $\epsilon \from B \to \kk$ are morphisms of unital algebras, that is, for all $h,k \in B$,
\[
\begin{aligned}
\Delta(hk)&=\sum_{(h)}\sum_{(k)} h_{(1)}k_{(1)}\tensor h_{(2)}k_{(2)} \\
\Delta(u(1)) &= u(1) \tensor u(1) \\
\epsilon(hk) &= \epsilon(h)\epsilon(k) \\
\epsilon(u(1)) &= 1
\end{aligned}
\]

$B$ is graded if $B = \bigoplus_{n=0}^\infty B_n$ is graded as an algebra and coalgebra, and connected if $B_0\simeq \kk$.

For $C$ coalgebra and $A$ algebra, $\Hom_{\kk}(C,A)$ equipped with the \emph{convolution product}
$f\ast g = \mu_{A} \circ (f\tensor g) \circ \Delta_C$
is an algebra.
The unit of the convolution product is $u_A \circ \epsilon_C$.

For $B$ a bialgebra, $\Hom_{\kk}(B,B)$ equipped with the convolution product is an algebra.
If the convolution inverse of the identity map exists, it is called the \emph{antipode}, and in this case the bialgebra is called a \emph{Hopf algebra}.

\begin{theorem}
\label{thm: bihopf}
\cite{Swe69}
A graded, connected bialgebra is a graded Hopf algebra.
\end{theorem}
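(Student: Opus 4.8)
The plan is to produce an \emph{antipode} for $H$, that is, to show that the identity map $\id\from H\to H$ possesses a two-sided inverse in the convolution algebra $(\Hom_\kk(H,H),\ast)$, whose unit is $u\circ\epsilon$. By the definition recalled above, exhibiting such an inverse is exactly what upgrades the bialgebra to a Hopf algebra, so the bulk of the work is its construction; verifying that it respects the grading is then immediate.

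First I would extract two consequences of gradedness and connectedness. Writing $\one=u(1)$ for the generator of $H_0\simeq\kk$, a short induction on degree using the counit axiom shows that $\epsilon$ vanishes on $H_n$ for every $n\geq 1$, so that $u\circ\epsilon$ is precisely the projection onto $H_0$. Dually, for $x\in H_n$ with $n\geq 1$ the same axiom forces
\[\Delta(x)=x\tensor\one+\one\tensor x+\Delta'(x),\qquad \Delta'(x)\in\bigoplus_{\substack{k+l=n\\ k,l\geq 1}}H_k\tensor H_l,\]
so the two tensor factors appearing in $\Delta'(x)$ both have degree strictly between $0$ and $n$.

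The heart of the argument is to set $f=\id-u\circ\epsilon$ and prove that $f$ is locally nilpotent for $\ast$. Since $f$ annihilates $H_0$ and preserves each $H_n$, I would expand the $k$-fold convolution power as $f^{\ast k}=\mu^{(k-1)}\circ f^{\tensor k}\circ\Delta^{(k-1)}$, where $\Delta^{(k-1)}$ is the iterated coproduct. Evaluated on $x\in H_n$, only the components of $\Delta^{(k-1)}(x)$ lying in $H_{n_1}\tensor\dotsb\tensor H_{n_k}$ with $n_1+\dotsb+n_k=n$ contribute, and a nonzero term requires every $n_i\geq 1$, forcing $k\leq n$. Hence $f^{\ast k}|_{H_n}=0$ whenever $k>n$, and the series
\[S=\sum_{k=0}^{\infty}(-1)^k f^{\ast k}\]
reduces to a finite sum on each $H_n$, defining a genuine element of $\Hom_\kk(H,H)$. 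A telescoping computation, using that $u\circ\epsilon=f^{\ast 0}$ is the convolution unit and that $f$ commutes with its own powers, then yields $\id\ast S=S\ast\id=u\circ\epsilon$, so $S$ is the antipode.

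Finally I would observe that each $f^{\ast k}$ is a composite of degree-preserving maps ($\Delta$, $\mu$ and $f$ are all homogeneous of degree $0$), whence $S$ is graded; together with the bialgebra axioms this exhibits $(H,\mu,u,\Delta,\epsilon,S)$ as a graded Hopf algebra. The step demanding the most care is the local-nilpotency bookkeeping: it rests entirely on connectedness squeezing the interior coproduct degrees into the range $1,\dotsc,n-1$, which is what makes the geometric series terminate. One could instead define $S$ by the recursion $S(\one)=\one$ and $S(x)=-x-\sum S(x')x''$ on $H_n$ for $n\geq 1$, but the geometric-series packaging has the advantage of producing a two-sided inverse at once, sparing the separate check that a left convolution inverse is also a right inverse.
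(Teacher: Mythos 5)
Your argument is correct and complete: the observation that connectedness forces $\epsilon$ to vanish in positive degree, the local nilpotency of $f=\id-u\circ\epsilon$ under convolution, and the telescoping geometric series for the antipode are all sound, and the grading of $S$ follows as you say. The paper itself gives no proof of this statement --- it is quoted from Sweedler as a known result --- so there is nothing to compare against; what you have written is the standard textbook construction (the convolution-geometric-series form of the antipode, equivalent to the degree recursion $S(x)=-x-\sum S(x')x''$ that the paper implicitly uses when it writes down the recursive formula for $a^{-1}$ in $C_{\mathcal{AF}}^\ast$), and it would serve as a self-contained proof if one were wanted.
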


\printbibliography
\end{document}